\newtheorem{thm}{Theorem}[section]
\newtheorem{cor}[thm]{Corollary}
\newtheorem{lem}[thm]{Lemma}
\newtheorem{prop}[thm]{Proposition}
\theoremstyle{remark}
\newtheorem{rem}[thm]{Remark}
\theoremstyle{definition}
\newcommand{\step}[2]{\noindent\textbf{Step #1:}(\emph{#2})}
\newcommand{\case}[2]{\noindent\textbf{Case #1:}(\emph{#2})}
\newcommand{\F}{\mathcal F}
\newcommand{\I}{\mathcal I}
\newcommand{\E}{\mathcal E}
\newcommand{\tC}{{\widetilde C}}
\newcommand{\Z}{{\mathbb Z}}
\renewcommand{\S}{{\mathbb S}}
\newcommand{\N}{{\mathbb N}}
\newcommand{\M}{{\mathcal{M}}}
\renewcommand{\H}{{\mathbb H}}
\newcommand{\R}{{\mathbb R}}
\newcommand{\lp}[2]{\Vert \, #1 \, \Vert_{#2}}
\newcommand{\td}{\widetilde}
\newcommand{\dint}{ {\int\!\!\int} }
\newcommand{\snabla}{ {\slash\!\!\! \nabla} }
\newcommand{\spartial}{ {\slash\!\!\! \partial} }
\newcommand{\bL}{ {\underline{L}} }
\newcommand{\ret}{\vspace{.2cm}}
\begin{document}

\title[Large data Wave Maps]
{ Regularity of Wave-Maps in dimension $2+1$}
\author{Jacob Sterbenz}
\address{Department of Mathematics,
University of California, San Diego, CA 92093-0112}
\email{jsterben@math.ucsd.edu}
\thanks{The first author was supported in part by the NSF grant DMS-0701087}
\author{Daniel Tataru}
\address{Department of Mathematics, University of California, Berkeley, CA 94720-3840}
\email{tataru@math.berkeley.edu}
\thanks{The second author was
  supported in part by the NSF grant DMS-0801261.}
\subjclass{}
\keywords{}
\date{}
\dedicatory{}
\commby{}


\begin{abstract}
  In this article we prove a Sacks-Uhlenbeck/Struwe type
  global regularity result for wave-maps
  $\Phi:\mathbb{R}^{2+1}\to\mathcal{M}$ into general compact target
  manifolds $\mathcal{M}$.
\end{abstract}

\maketitle

\tableofcontents


\section{Introduction}

In this article we consider  large data Wave-Maps from
$\R^{2+1}$ into a compact Riemannian manifold $(\M,m)$, and we prove
that regularity and dispersive bounds persist as long as a
soliton-like concentration is absent. This is a companion to our
concurrent article \cite{STed}, where the same result is proved under
a stronger energy dispersion assumption, see Theorem~\ref{ed} below.

The set-up we consider is the same as the one in \cite{Tataru_WM1}, using
the extrinsic formulation of the Wave-Maps equation.
Precisely, we consider the target manifold $(\mathcal{M},m)$ as an isometrically
embedded submanifold of $\R^N$. Then we can view the $\mathcal{M}$ valued
functions as $\R^N$ valued functions whose range is contained in $\mathcal{M}$.
Such an embedding always exists by Nash's theorem~\cite{MR17:782b}
(see also Gromov~\cite{MR43:1212} and G\"unther~\cite{MR93b:53049}).
In this context the Wave-Maps equation can be expressed in a form
which involves the second fundamental form $\mathcal{S}$ of $\mathcal{M}$, viewed as a
symmetric bilinear form:
\begin{align}
    \mathcal{S}:\, T\mathcal{M} \times T\mathcal{M}  \ &\to  \ N\mathcal{M}\ ,
    &\langle S(X,Y),N\rangle  \ &=  \ \langle \partial_X
    N, Y\rangle \ . \notag
\end{align}
The Cauchy problem for the wave maps equation
has the form:
\begin{subequations}\label{main_eq}
\begin{align}
    \Box \phi^a  \ &=  \ - \mathcal{S}^{a}_{bc}(\phi) \partial^\alpha \phi^b
        \partial_\alpha\phi^c \ , &(\phi^1,\ldots,\phi^N) \ := \ \Phi  \ , \label{main_eq1}\\
    \Phi(0,x)\ &= \ \Phi_0(x) \ , \ \ \partial_t\Phi(0,x) \ = \ \Phi_1(x) \ . \label{main_eq2}
\end{align}
\end{subequations}
where the  initial data $(\Phi_0,\Phi_1)$ is  chosen to obey the constraint:
\begin{align}
    \Phi_0(x) \ &\in\  \mathcal{M} \ ,
    &\Phi_1(x) \ &\in \ T_{\Phi_0(x)}\mathcal{M} \ ,
    &x &\in \R^2 \ . \notag
\end{align}
In the sequel, it will be convenient for us to use the notation
$\Phi[t] = (\Phi(t), \partial_t \Phi(t))$.

There is a conserved energy for this problem,
\[
\E[\Phi](t) = \frac12 \int (|\partial_t \Phi|^2 + |\nabla_x \Phi|^2) dx \ .
\]
This is  invariant with respect to the scaling that preserves the
equation, $\Phi(x,t) \to \Phi(\lambda x,\lambda t)$. Because of this, we say that the
problem is {\bf energy critical}.

The present article contributes to the understanding of finite
energy solutions with arbitrarily large initial data, a problem
which has been the subject of intense investigations for some time
now. We will not attempt to give a detailed account of the history
of this subject here. Instead,  we refer the reader to the surveys
\cite{MR2043751} and \cite{MR2488946}, and the references therein.
The general (i.e. without symmetry assumptions) small energy problem
for compact targets was initiated in the ground-breaking work of
Klainerman-Machedon \cite{MR94h:35137}--\cite{MR1381973}, and
completed by the work of Tao~\cite{Tao_WM} when $\M$ is a sphere,
and Tataru~\cite{Tataru_WM1} for general isometrically embedded
manifolds; see also the work of Krieger~\cite{MR2094472} on the
non-compact hyperbolic plane, which was treated from the intrinsic
point of view.

At a minimum one expects
the solutions to belong to the space $C(\R; \dot H^1(\R^2)) \cap \dot
C^1(\R,L^2(\R^2))$. However, this information does not suffice in
order to study the equation and to obtain uniqueness statements. Instead,
a smaller Banach space $S \subset C(\R; \dot H^1(\R^2)) \cap \dot
C^1(\R,L^2(\R^2))$ was introduced in \cite{Tao_WM}, modifying an earlier
structure in \cite{Tataru_WM2}. Beside the energy, $S$ also contains
Strichartz type information in various frequency localized
contexts. The full description of $S$ is not necessary here. However,
we do use the fact that $S \cap L^\infty$ is an algebra, as well as
the $X^{s,b}$ type embedding $ S \subset \dot X^{1,\frac12}_{\infty}$.
See our companion paper \cite{STed} for precise definitions. 
The standard small data result is as follows:

\begin{thm}[\cite{Tao_WM},\cite{Tataru_WM1},\cite{MR2094472}]
There is some $E_0 > 0$ so that for each smooth initial data
$(\Phi_0,\Phi_1)$ satisfying $\E[\Phi](0) \leq E_0$ there exists
a unique global smooth solution $\Phi = T(\Phi_0,\Phi_1) \in S$.
In addition, the above solution operator $T$ extends to a continuous operator from $\dot H^1 \times L^2$ to $S$ with
\begin{equation}
\| \Phi\|_{S} \lesssim \|(\Phi_0,\Phi_1)\|_{\dot{H}^1 \times L^2} \ . \notag
\end{equation}
 Furthermore, the following weak
Lipschitz stability estimate holds for these solutions for $s <1$ and close to $1$:
\begin{equation}
    \| \Phi - \Psi\|_{C(\R; \dot H^s) \cap \dot C^1(\R,\dot H^{s-1})}
    \lesssim \| \Phi[0] - \Psi[0]\|_{\dot H^s \times \dot H^{s-1}}
     \ . \label{weaklip}
\end{equation}
\label{tsmall}\end{thm}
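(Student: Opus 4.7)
The plan is a Picard iteration in the Banach space $S$, driven by the two structural facts highlighted above: $S\cap L^\infty$ is an algebra, and $S\subset \dot X^{1,\frac12}_{\infty}$. Since $\M$ is compact, one may extend the second fundamental form $\mathcal{S}$ smoothly and with compact support to all of $\R^N$, so that \eqref{main_eq1} becomes a genuine semilinear wave system in $\R^N$; one then solves this extended system with the given $\M$-compatible data and verifies a posteriori that the solution remains on $\M$, using that $\mathcal{S}(X,Y)$ is normal to $T\M$ together with an energy identity for the squared distance to $\M$.

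The analytic core consists of two estimates. First, the linear $S$-bound
\begin{equation*}
\|u\|_S \ \lesssim \ \|u[0]\|_{\dot H^1\times L^2} + \|\Box u\|_N
\end{equation*}
for an appropriate null-frame-type dual space $N$. Second, a trilinear null-form estimate of the schematic form
\begin{equation*}
\bigl\|\mathcal{S}(\Phi)\,\partial^\alpha\Phi\,\partial_\alpha\Phi\bigr\|_N \ \lesssim \ \bigl(1+\|\Phi\|_{S\cap L^\infty}\bigr)^{K}\,\|\Phi\|_S^2,
\end{equation*}
in which the null cancellation inside $\partial^\alpha\Phi\,\partial_\alpha\Phi$ is exploited through the $\dot X^{1,\frac12}_{\infty}$ embedding, while the Moser-type composition $\mathcal{S}(\Phi)$ is controlled by the algebra property of $S\cap L^\infty$ combined with a paradifferential decomposition.

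Granted these, I would set $\Phi^{(0)}$ to be the free wave with data $(\Phi_0,\Phi_1)$ and define $\Phi^{(n+1)}$ by $\Box\Phi^{(n+1)} = -\mathcal{S}(\Phi^{(n)})(\partial^\alpha\Phi^{(n)},\partial_\alpha\Phi^{(n)})$ with the same initial data. For $E_0$ sufficiently small this is a contraction in a small ball of $S$, producing the unique global $S$-solution $\Phi=T(\Phi_0,\Phi_1)$ together with the stated norm bound. Approximating rough data by smooth data then extends $T$ continuously from $\dot H^1\times L^2$ to $S$. For \eqref{weaklip}, I subtract the equations for $\Phi$ and $\Psi$: the difference $\Phi-\Psi$ satisfies a wave equation whose nonlinearity is linear in $\Phi-\Psi$ with coefficients that are linear in $\Phi,\Psi$ and hence small in $S$. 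Rerunning the linear and multilinear estimates in a lower-regularity analogue $S^s$ of $S$ for $s<1$ close to $1$ then yields \eqref{weaklip} by absorbing the small terms into the left-hand side.

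The main obstacle is by a wide margin the construction of $S$ and $N$ and the proof of the multilinear null-form estimate; this is the technical heart of \cite{Tao_WM} and \cite{Tataru_WM1}, and the reason the scale-invariant problem resisted small-data treatment for so long. The estimate must simultaneously accommodate high$\times$high$\to$low and high$\times$low$\to$high frequency interactions, each of which requires a different component of the $S$ norm (energy, Strichartz, and null-frame $L^\infty L^2$). Extracting the null cancellation in a form compatible with all of these pieces, and then composing with the nonlinear function $\mathcal{S}(\Phi)$ through the algebra/paradifferential decomposition, is where essentially all of the difficulty lies.
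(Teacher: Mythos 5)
The paper does not prove this theorem; it is quoted from \cite{Tao_WM}, \cite{Tataru_WM1} and \cite{MR2094472}, so there is no internal proof to compare against. Judged on its own terms, your outline has a genuine gap that is not merely a matter of deferred technicalities: the direct Picard iteration you propose is known to fail for wave maps at the critical regularity $\dot H^1\times L^2$ in dimension $2+1$. The trilinear estimate you postulate, $\|\mathcal{S}(\Phi)\,\partial^\alpha\Phi\,\partial_\alpha\Phi\|_N\lesssim(1+\|\Phi\|_{S\cap L^\infty})^K\|\Phi\|_S^2$, does not hold in a form that closes a contraction. The obstruction is the frequency interaction in which $\mathcal{S}(\Phi)\,\partial^\alpha\Phi$ contributes a low-frequency factor multiplying a high-frequency $\partial_\alpha\Phi$: this piece behaves like a connection term $A_\alpha\partial^\alpha\Phi_{\mathrm{high}}$ for which the null structure gives no gain, and the sum over the low frequencies diverges logarithmically. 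This is precisely why the scale-invariant Sobolev problem resisted treatment even after the null-form machinery of Klainerman--Machedon was in place, and why iteration does succeed in the critical Besov space $\dot B^{1}_{2,1}$ (\cite{Tataru_WM2}) but not in $\dot H^1$.

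The missing idea is the renormalization, i.e.\ the microlocal gauge transformation: one conjugates the frequency-$2^k$ part of the equation by a matrix $U_{<k}$ built from the low frequencies of $\Phi$ (products of orthogonal matrices in Tao's treatment of the sphere, a paradifferential construction in Tataru's treatment of general targets), chosen so that the bad connection term is cancelled up to errors that \emph{are} estimable in $N$. The contraction or bootstrap argument is then run on the renormalized variable, and the bounds are transferred back to $\Phi$. Your remaining steps --- extending $\mathcal{S}$ off $\M$ and confining the solution to the target a posteriori, and deriving \eqref{weaklip} by estimating the difference equation in a lower-regularity space $S^s$ with $s<1$ --- are consistent with what the cited references do, but without the renormalization the analytic core of your scheme does not close.
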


Due to the finite speed of propagation, the corresponding local result
is also valid, say with the initial data in a ball and the solution in
the corresponding domain of uniqueness.

The aim of the companion \cite{STed} to the present work is to provide
a conditional $S$ bound for large data solutions, under a weak energy
dispersion condition. For that, we have introduced the notion of the
{\em energy dispersion} of a wave map $\Phi$ defined on an interval $I$,
\[
 ED[\Phi] = \sup_{k \in \Z} \| P_k \Phi\|_{L^\infty_{t,x}[I]}
\]
where $P_k$ are spatial Littlewood-Paley projectors at frequency $2^k$.
The main result in \cite{STed} is as follows:

\begin{thm}[Main Theorem in \cite{STed}]
For each $E > 0$ there exist $F(E)>0$ and $\epsilon(E) > 0$
so that  any wave-map $\Phi$ in a time interval $I$ with energy
$\E[\Phi] \leq E$ and energy dispersion $ED[\Phi] \leq \epsilon(E)$
must satisfy $\|\Phi\|_{S[I]} \leq F(E)$.
\label{ed}\end{thm}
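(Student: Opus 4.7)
My plan is a continuity/bootstrap argument for $\|\Phi\|_S$ combined with a frequency-envelope analysis. Since the $S$-norm behaves well under concatenation of subintervals, it suffices to prove a closed implication on each subinterval $J \subset I$: if $\|\Phi\|_{S[J]} \leq 2F(E)$, then $\|\Phi\|_{S[J]} \leq F(E)$, provided $\epsilon(E)$ is chosen small in terms of $E$ and $F(E)$. Standard continuity in $J$ then extends the $S$-bound to all of $I$. The base case is provided by Theorem \ref{tsmall}, which gives control at sufficiently small $S$-norm at the left endpoint of $J$.

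To implement the bootstrap I would fix an admissible frequency envelope $c_k$ for the initial data, i.e.\ slowly varying with $\sum_k c_k^2 \lesssim E$ and dominating $\|P_k \Phi[0]\|_{\dot H^1 \times L^2}$, and aim for $\|P_k \Phi\|_{S[J]} \lesssim c_k$ at each frequency $k \in \Z$. Applying $P_k$ to \eqref{main_eq1} gives a paradifferential equation of schematic form $\Box P_k \Phi = 2 \mathcal{S}(\Phi)\partial \Phi_{<k} \cdot \partial P_k \Phi + \textrm{trilinear remainder}$. The leading connection term is bilinear in $\Phi$ and, unlike the small-data situation, cannot be absorbed perturbatively at large energy. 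Following the strategy of \cite{Tao_WM, Tataru_WM1}, I would absorb it into a renormalized d'Alembertian via a microlocal gauge transformation associated to the pulled-back connection form of $\Phi$, so that the residual nonlinearity is genuinely trilinear.

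The decisive input should then be a trilinear estimate of the schematic form $\|P_k \mathcal{N}_{\textrm{ren}}(\Phi,\Phi,\Phi)\|_{N[J]} \lesssim ED[\Phi]^{\alpha} \, G(E,F(E)) \, c_k$ for some $\alpha > 0$, where the small factor arises by interpolating the $L^\infty_{t,x}$ bound $\|P_j \Phi\|_{L^\infty_{t,x}} \leq ED[\Phi] \leq \epsilon(E)$ against the bootstrap $S$-norm hypothesis, exploiting in particular the embedding $S \subset \dot X^{1,\frac12}_\infty$ noted in the excerpt. This has to be verified in each frequency-interaction configuration (low-high, high-high to high, and especially high-high to low, where the null structure of the wave maps nonlinearity is essential). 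Summing in $k$ against the envelope $c_k$, and taking $\epsilon(E)$ small enough in terms of $E$ and $F(E)$, closes the bootstrap with room to spare.

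The main obstacle I anticipate is the renormalization step at large energy. At small data the gauge is a perturbation of the identity and its mapping properties on frequency-localized $S$ and $N$ spaces are routine; here the gauge may be macroscopic, and its boundedness on these spaces with operator norm depending only on $E$ needs separate care. The key observation supporting perturbativity is that the corrections the renormalization introduces into $\Box P_k$ depend only on the portion of $\Phi$ at frequencies $\lesssim k$, whose $L^\infty_{t,x}$ mass is dominated precisely by $ED[\Phi]$; hence the renormalized operator remains close enough to $\Box$, uniformly in $k$ and in the large energy, to feed the trilinear estimates above. Making this rigorous in a form compatible with the anisotropic $S$-norms is where I expect the real work to lie.
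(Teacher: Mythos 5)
This statement is not proved in the present paper at all: it is Theorem~\ref{ed}, quoted verbatim from the companion article \cite{STed}, and the authors explicitly defer its proof there. So there is no ``paper's own proof'' to compare against; what you have written is a sketch of the strategy that the companion paper actually carries out (continuity/bootstrap on the $S$-norm, frequency envelopes, paradifferential renormalization in the spirit of \cite{Tao_WM} and \cite{Tataru_WM1}, and multilinear null-form estimates in which a power of $ED[\Phi]$ is extracted by interpolating the $L^\infty_{t,x}$ smallness against the bootstrapped $S$-norm). At the level of architecture your outline is the right one.

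As a proof, however, it has gaps beyond the two you flag yourself. First, the $S$-norm does \emph{not} behave well under concatenation of subintervals --- bounds on $[a,b]$ and $[b,c]$ do not combine to a bound on $[a,c]$ with a controlled constant --- so the reduction ``it suffices to prove a closed implication on each subinterval $J$'' is not available as stated; the continuity argument has to be run on a single expanding interval. Second, Theorem~\ref{tsmall} cannot serve as the base case at large energy; the base case comes instead from the local theory for smooth data, which makes $\|\Phi\|_{S[t_0,t]}$ finite and continuous in $t$ with a limit controlled by the data norm as $t\to t_0^+$. Third, and most substantively, the two items you defer --- boundedness of the large (non-perturbative) gauge transformation on the frequency-localized $S$ and $N$ spaces with constants depending only on $E$, and the full case analysis of the trilinear and higher-order estimates with an $ED^\alpha$ gain, including the interactions where no such gain is available and which must instead be absorbed by the renormalization --- are not technicalities: they constitute essentially the entire content of \cite{STed}, and the heuristic that the connection coefficients ``depend only on frequencies $\lesssim k$ whose $L^\infty$ mass is dominated by $ED[\Phi]$'' does not by itself make the renormalized operator perturbatively close to $\Box$, since the low-frequency paraproduct term carries $O(E)$ worth of derivative of $\Phi$ regardless of the energy dispersion. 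Your proposal is therefore a correct program but not a proof, and none of its hard steps are supplied by anything in the present paper.
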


In particular, this result implies that a wave map with energy $E$ cannot
blow-up as long as its energy dispersion stays below $\epsilon(E)$.

In this article we establish unconditional analogs of the above result,
in particular settling the blow-up versus global
regularity and scattering question for the large data problem.
We begin with some notations. We consider the forward light cone
\[
C = \{ 0 \leq t < \infty, \ r \leq t \}
\]
and its subsets
\[
 C_{[t_0,t_1]} = \{ t_0 \leq t \leq t_1, \ r \leq t \} \ .
\]
The lateral boundary of $C_{[t_0,t_1]}$ is denoted by
 $\partial C_{[t_0,t_1]}$.
The time sections of the cone are denoted by
\[
S_{t_0} = \{t = t_0, \ |x| \leq t\} \ .
\]
We also use the translated cones
\[
C^\delta = \{ \delta \leq t < \infty, \ r \leq t-\delta \}
\]
as well as the corresponding notations
$ C_{[t_0,t_1]}^\delta$,  $\partial C_{[t_0,t_1]}^\delta$ and $S_{t_0}^\delta$
for $t_0  > \delta$.

Given a wave map $\Phi$ in $C$ or in a subset $ C_{[t_0,t_1]}$
of it we define  the energy of $\Phi$ on time sections as
\[
\E_{S_t}[\Phi] = \frac12 \int_{S_t}
(|\partial_t  \Phi|^2 + |\nabla_x \Phi|^2)
dx \ .
\]
 It is convenient
to do the computations in terms of the null frame
\begin{align}
  L \ &= \ \partial_t + \partial_r \ , &\bL \ &= \ \partial_t -
  \partial_r \ , &\spartial \ &= \ r^{-1}\partial_\theta \ . \notag
\end{align}
We define the flux of $\Phi$ between $t_0$ and $t_1$
as
\[
\F_{[t_0,t_1]}[\Phi] =  \int_{\partial C_{[t_0,t_1]}}
 \big( \frac{1}{4}|L \Phi|^2  +  \frac{1}{2}|\spartial \Phi|^2\big) dA \ .
\]

By standard energy estimates we have the energy conservation relation
\begin{equation}
    \E_{S_{t_1}}[\Phi] = \E_{S_{t_0}}[\Phi] +  \F_{[t_0,t_1]}[\Phi]
    \ . \label{en-flux}
\end{equation}
This shows that
$\E_{S_t}[\Phi]$ is a nondecreasing function of $t$.

\subsection{The Question of Blowup}

We begin with the blow-up question.
A standard argument which uses the small data result and the finite
speed of propagation shows that if blow-up occurs then it must occur
at the tip of a light-cone where the energy (inside the cone) concentrates.
 After a translation and rescaling it suffices to consider wave maps
 $\Phi$ in the cone $C_{[0,1]}$. If
\[
\lim_{t \to 0} \E_{S_t}[\Phi] \leq E_0
\]
then blow-up cannot occur at the origin due to the local result,
and in fact it follows that
\[
\lim_{t \to 0} \E_{S_t}[\Phi] = 0 \ .
\]
Thus the interesting case is when we are in an energy concentration scenario
\begin{equation}
\lim_{t \to 0} \E_{S_t}[\Phi] > E_0 \ .
\label{econ}\end{equation}
The main result we  prove here is the following:

\begin{thm}
Let $\Phi: C_{(0,1]} \to \M$ be a $C^\infty$ wave map. Then exactly
one of the following possibilities must hold:

\begin{enumerate}[A)]
\item There exists a sequence of points $(t_n,x_n) \in C_{[0,1]}$ and
 scales $r_n$ with
\[
(t_n,x_n) \to (0,0) \ , \qquad \limsup \frac{|x_n|}{t_n} < 1 \ , \qquad
 \lim \frac{r_n}{t_n} = 0
\]
so that the  rescaled sequence of wave-maps
\begin{equation}
    \Phi^{(n)}(t,x) \ = \  \Phi\big(t_n+r_n t,x_n + r_n x\big) \ ,
\label{rescaling}
\end{equation}
converges strongly in $H^1_{loc}$ to a Lorentz transform of an entire
Harmonic-Map of
nontrivial energy:
\begin{equation}
    \Phi^{(\infty)}:\mathbb{R}^{2}\to \M \ , \qquad
    0 <   \lp{\Phi^{(\infty)}}{\dot{H}^1(\mathbb{R}^2)}
    \leqslant \lim_{t \to 0} \E_{S_t}[\Phi]
    \ . \notag
\end{equation}

\item For each $\epsilon > 0$ there exists $0< t_0 \leqslant 1$ and a
wave map extension
\[
\Phi: \R^2 \times (0,t_0] \to \M
\]
with bounded energy
\begin{equation}
\E[\Phi] \leq (1+ \epsilon^8) \lim_{t \to 0} \E_{S_t}[\Phi]
\label{caseben}\end{equation}
and energy dispersion,
\begin{equation}
\sup_{t \in (0,t_0]} \sup_{k \in \Z}\big( \| P_k \Phi(t)\|_{L^\infty_x} + 2^{-k}
\|P_k \partial_t \Phi(t)\|_{L^\infty_x}\big) \leq \epsilon \ .
\label{casebed}\end{equation}
\end{enumerate}
\label{maint}\end{thm}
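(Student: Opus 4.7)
The approach is a dichotomy argument by contradiction: I would assume possibility B fails for some fixed $\epsilon_0 > 0$ and construct the bubble required by A. The starting point is the monotonicity consequence of \eqref{en-flux}: the limit $E_* = \lim_{t \to 0} \E_{S_t}[\Phi]$ exists and the flux $\F_{[t_1,t_2]}[\Phi]$ tends to zero as $t_1, t_2 \to 0$. A standard Morawetz/multiplier computation with timelike multipliers centered at interior points refines this to local flux decay on small truncated cones whose vertices approach the tip.

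The first step is to produce, for every $\epsilon > 0$, a wave-map extension $\tilde\Phi: \R^2 \times (0, t_0] \to \M$ satisfying \eqref{caseben}. The construction is the standard finite-energy extension: choose $t_0$ so that $\F_{[0, t_0]}[\Phi] \ll \epsilon^8 E_*$, truncate $\Phi[t_0]$ smoothly outside $S_{t_0}$ and reproject back onto $\M$, so that the added exterior energy is controlled by the outgoing flux. Solving via Theorem~\ref{tsmall} in the exterior (where the data has small energy) and matching inside by finite speed of propagation produces $\tilde\Phi$. If \eqref{casebed} also held we would be in case B; under our contradictory hypothesis there exist sequences $t_n \to 0$, $k_n \in \Z$, $x_n \in \R^2$ with
\[
|P_{k_n} \tilde\Phi(t_n, x_n)| + 2^{-k_n}|P_{k_n} \partial_t \tilde \Phi(t_n, x_n)| \geq \tfrac12 \epsilon_0.
\]
A small-energy argument based on Theorem~\ref{tsmall} lets me arrange $|x_n| \leq (1-\eta) t_n$ and $r_n := 2^{-k_n} \ll t_n$, which gives the geometric conditions stated in A.

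The second step is to rescale as in \eqref{rescaling} and pass to a weak $H^1_{loc}$ limit $\Phi^{(\infty)}$. Each $\Phi^{(n)}$ is a smooth wave map on a spacetime region that exhausts $\R^{2+1}$, has energy uniformly bounded by $E_*$, and satisfies a unit-frequency $L^\infty$ lower bound at the origin. The local flux decay, combined with the scaling invariance of $\F$, shows that for any bounded region $K \subset \R^{2+1}$ the flux of $\Phi^{(n)}$ on arbitrary truncated cones in $K$ tends to zero. In the limit this forces $L \Phi^{(\infty)} = 0$ and $\spartial \Phi^{(\infty)} = 0$ in the null frame whose axis is the velocity $v = \lim x_n / t_n \in \{|v|<1\}$, so after a Lorentz boost of velocity $v$ the map $\Phi^{(\infty)}$ is $t$-independent, i.e. an entire harmonic map $\R^2 \to \M$.

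The hard part is upgrading weak to strong $H^1_{loc}$ convergence, which is what guarantees both the nontriviality of $\Phi^{(\infty)}$ and its energy bound in A. I would handle this via a bubble-tree / profile decomposition: if strong convergence fails, a defect of compactness concentrates at some new spacetime point and scale, and I iterate the above extraction there. Each iteration peels off a bubble of energy at least $E_0$ by Theorem~\ref{tsmall}, so the process terminates after finitely many steps and on the final iterate strong convergence holds. Nontriviality of the extracted bubble follows from the persisting $P_0$ lower bound, together with Theorem~\ref{ed}, which prevents the limit from being trivial once its energy dispersion stays bounded away from zero at frequency one.
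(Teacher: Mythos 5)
Your extension step matches the paper's, but the core of the argument has several genuine gaps. The most serious is the mechanism by which you obtain a harmonic map in the limit. Vanishing of the flux $\F_{[t_1,t_2]}[\Phi]$ controls $L\Phi$ and $\spartial\Phi$ only \emph{on the lateral boundary} $\partial C$; it says nothing about null derivatives at interior points, so it cannot force $L\Phi^{(\infty)}=0$ and $\spartial\Phi^{(\infty)}=0$ on an open set. Moreover, even if those two identities held, they would characterize a function of $u=t-r$ alone (only $\bL\Phi^{(\infty)}$ surviving), which is a null wave, not a harmonic map. What actually produces the harmonic map in the paper is the interior space--time estimate $\dint \rho^{-1}|X_0\Phi|^2\,dxdt\lesssim E+\epsilon^{-1/2}F$ for the timelike multiplier $X_0=\rho^{-1}(t\partial_t+r\partial_r)$ (Section~\ref{energy_sect}); after rescaling about an interior point this yields $X\Psi=0$ for a \emph{constant} timelike $X$, and a Lorentz boost turns that into $\partial_t\Psi=0$. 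Relatedly, your claim that "a small-energy argument" arranges $|x_n|\le(1-\eta)t_n$ and $2^{-k_n}\ll t_n$ hides the hardest part of the proof: ruling out null concentration $|x_n|\to 1$, $k_n\to\infty$ requires the weighted fixed-time bound \eqref{enxea} together with an anisotropic decomposition $P_k=P^1_{k,\theta}\partial_1+P^2_{k,\theta}\partial_2$ to exploit the degenerate weight near the light cone; it does not follow from Theorem~\ref{tsmall}.

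Two further gaps. First, your bubble-tree termination argument assumes each defect of $H^1_{loc}$ compactness carries energy at least $E_0$, but nothing guarantees this: sequences of small-energy wave maps can fail to converge strongly by radiating at null frequencies while carrying arbitrarily little energy. The paper's compactness (Proposition~\ref{pcompact}) is not a profile decomposition; it needs \emph{both} local energy below $E_0/10$ \emph{and} $X\Phi^{(n)}\to 0$ in $L^2$ for a timelike $X$, the latter supplying the extra half derivative via a microlocal splitting. Second, you never address the scenario in which no small-scale concentration occurs, i.e.\ $r_n\sim t_n$, where the limit is a self-similar wave map (a harmonic map from $\H^2$). This case is not covered by statement A (which demands $r_n/t_n\to 0$) and must be excluded separately; the paper does so in Theorem~\ref{ss_thm} via the weighted hyperbolic energy bound \eqref{drho0_hyp_eng}, Qing's boundary regularity theorem, and Lemaire's uniqueness theorem. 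Finally, nontriviality of the limit does not follow from "the persisting $P_0$ lower bound": frequency-localized pointwise bounds do not survive weak limits. The paper instead converts \eqref{bad_disp_bound} into a quantitative time-like energy lower bound inside a strictly smaller cone and propagates it backward in time with the $X_0$ estimates, normalizing the concentration scale so that the limit inherits energy exactly $\frac{1}{10}E_0$ in a unit ball.
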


We remark that a nontrivial harmonic map
$\Phi^{(\infty)}:\mathbb{R}^{2}\to \M$ cannot have an arbitrarily
small energy. Precisely, there are two possibilities.  Either there
are no such harmonic maps (for instance, in the case when $\M$ is
negatively curved, see \cite{Lem}) or there exists a lowest energy
nontrivial harmonic map, which we denote by $\E_{0}(\M) >
0$. Furthermore, a simple computation shows that the energy of any
harmonic map will increase if we apply a Lorentz
transformation. Hence, combining the results of Theorem~\ref{maint}
and Theorem~\ref{ed} we obtain the following:

\begin{cor}(Finite Time Regularity for Wave-Maps)
The following statements hold:

\begin{enumerate}[A)]

\item Assume that $\M$ is a compact Riemannian manifold so that there
  are no nontrivial finite energy harmonic maps
  $\Phi^{(\infty)}:\mathbb{R}^{2}\to \M$. Then for any finite energy
  data $\Phi[0]:\R^2\times \R^2\to \M\times T\M$ for the wave map
  equation \eqref{main_eq} there exists a global solution $\Phi \in
  S(0,T)$ for all $T > 0$. In addition, this global solution retains
  any additional regularity of the initial data.

\item Let $\pi:\td{\M}\to \M$ be a Riemannian covering, with $\M$ compact, and
such that there are no nontrivial finite energy harmonic  maps
$\Phi^{(\infty)}:\mathbb{R}^{2}\to \M$.
If $\Phi[0]:\R^2\times \R^2\to
\td{\M}\times T\td{\M}$ is $C^\infty$, then there is a  global  $C^\infty$
solution to $\td{\M}$ with this data.

\item Suppose that there exists a lowest energy nontrivial harmonic map
into $\M$
with energy $\E_{0}(\M)$. Then for any data $\Phi[0]:\R^2\times \R^2\to
\M\times T\M$ for the wave map
equation \eqref{main_eq} with energy below $\E_{0}(\M)$, there exists a
global solution $\Phi \in S(0,T)$ for all $T > 0$.
\end{enumerate}
\label{tccor}\end{cor}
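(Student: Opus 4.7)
The plan is to derive all three statements by combining the dichotomy in Theorem \ref{maint} with the conditional bound of Theorem \ref{ed}, using the standard fact that energy-critical wave maps can only blow up via energy concentration at the tip of a light cone. By finite speed of propagation and the small data theorem, if a smooth solution fails to extend globally then there is a first singular time, and after translation and rescaling we may assume the wave map is defined on $C_{(0,1]}$ with $\lim_{t\to 0} \E_{S_t}[\Phi] > E_0$. Apply Theorem \ref{maint} to this rescaled solution: we are either in Case A or in Case B.

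For Parts A and C, the goal is to rule out both cases under the respective hypotheses and thereby preclude blow-up. For Part A, Case A immediately contradicts the assumed nonexistence of nontrivial harmonic maps $\R^2 \to \M$, since the $H^1_{\mathrm{loc}}$ limit $\Phi^{(\infty)}$ (after undoing the Lorentz transform, which preserves the harmonic map property) has strictly positive energy. For Part C, Case A is excluded because the limit $\Phi^{(\infty)}$ is a nontrivial harmonic map whose energy is bounded above by $\lim_{t\to 0}\E_{S_t}[\Phi]\leq \E[\Phi[0]] < \E_0(\M)$, and stripping the Lorentz transform only decreases energy; this contradicts the minimality of $\E_0(\M)$. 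In either setting we are thus forced into Case B, which provides, for each $\epsilon>0$, a wave map extension on $\R^2\times (0,t_0]$ with bounded energy and $ED[\Phi]\leq \epsilon$. Choosing $\epsilon=\epsilon(E)$ as in Theorem \ref{ed}, where $E$ is the energy bound from \eqref{caseben}, yields $\|\Phi\|_{S(0,t_0]} \leq F(E)$. The finite $S$ norm together with the weak Lipschitz stability \eqref{weaklip} and a standard continuation/persistence-of-regularity argument allows us to extend the smooth solution down to $t=0$ and then continue it past, contradicting the supposed blow-up. The same scheme, iterated, yields solutions on every compact time interval $(0,T)$ and preserves additional regularity.

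For Part B, the strategy is to reduce to Part A on the base and then lift. Given smooth data $\Phi[0]$ into $\td\M$, the projected data $\pi\circ\Phi[0]$ is smooth data into $\M$, so Part A furnishes a global smooth solution $\Psi:\R^2\times[0,\infty)\to\M$ with $\Psi[0]=\pi\circ \Phi[0]$. Since $\R^2\times[0,\infty)$ is simply connected, the covering space theory yields a unique continuous lift $\td\Psi:\R^2\times[0,\infty)\to\td\M$ of $\Psi$ agreeing with $\Phi[0]$ at $t=0$; because $\pi$ is a local isometry, $\td\Psi$ is automatically smooth and solves the wave map equation into $\td\M$ (which is locally identical to the equation into $\M$). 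Uniqueness in the small-data theory on small balls, combined with the finite speed of propagation, identifies this lift as the unique smooth wave map extension of $\Phi[0]$.

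The main obstacle I expect is the Case A elimination in Parts A and C: making rigorous use of the $H^1_{\mathrm{loc}}$ convergence in \eqref{rescaling} to extract an actual harmonic map limit of controlled energy, and ensuring that the Lorentz transform bookkeeping correctly yields a nontrivial harmonic map into $\M$ with energy strictly less than $\E_0(\M)$ in Part C. The Case B step is comparatively mechanical once Theorem \ref{ed} is invoked, and the lifting argument in Part B is standard provided one notes that smoothness of the lift follows from $\pi$ being a local diffeomorphism.
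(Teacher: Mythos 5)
Your argument for parts A) and C) is exactly the paper's intended one: run the blow-up dichotomy of Theorem~\ref{maint} at a putative first singular time, kill case A) either by the nonexistence hypothesis or by the energy comparison $\E[\psi]\leq \E[\Phi^{(\infty)}]\leq \lim_{t\to 0}\E_{S_t}[\Phi]<\E_0(\M)$ (using that a Lorentz boost only increases the energy of a harmonic map), and then feed case B) into Theorem~\ref{ed} to get an $S$ bound that contradicts blow-up.

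For part B) you take a mildly different route from the paper. The paper argues locally: it projects $\Phi$ to $\pi\circ\Phi$, invokes part A) to get regularity of the projection, observes that on a sufficiently small section $S_t$ near the suspected blow-up point the image of the projection lies in a simply connected set, and inverts $\pi$ there to conclude regularity of the original map near that point. You instead solve globally downstairs and lift the entire solution $\Psi:\R^2\times[0,\infty)\to\M$ through the covering, using simple connectivity of the spacetime domain and the fact that $\pi$ is a local isometry; uniqueness of lifts agreeing with $\Phi[0]$ at $t=0$ identifies the lift with $\Phi$. Both are correct; the paper's version is more economical in that it only needs the covering inverted on a small ball (so it never has to discuss global lifts), while yours packages the whole of part B) as one application of covering space theory and makes the global statement immediate rather than reducing to a local regularity claim. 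One small point worth flagging in either approach: Theorem~\ref{maint} is stated for $C^\infty$ wave maps, so the claim in part A) for general finite energy data (and the "retains additional regularity" clause) requires a routine density/continuity-of-the-solution-map argument on top of the smooth case; you gesture at this but, like the paper, do not spell it out.
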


We remark that the  statement in part B) is a simple consequence of A)
and restricting the  projection $\pi\circ\Phi$ to a sufficiently
small section $S_t$ of a cone where one expects blowup of the
original map into $\td{\M}$. In particular, since this projection is
regular by part A), its image lies in a simply
connected set for sufficiently small $t$. Thus, this projection can
be inverted to yield regularity of the original map close to the
suspected blowup point. Because of this trivial reduction, we work
exclusively with compact $\M$ in the sequel. It should be remarked
however, that as a (very) special case of this result one obtains
global regularity for smooth Wave-Maps into all hyperbolic spaces
$\mathbb{H}^n$, which has been a long-standing and important
conjecture in geometric wave equations due to its relation with
problems in general relativity (see Chapter 16 of \cite{MR2473363}).

The statement of Corollary \ref{tccor} in its full generality was
known as the \emph{Threshold Conjecture}. Similar results were
previously established for the Wave-Map problem via symmetry
reductions in the works \cite{MR94e:58030}, \cite{MR96c:58049},
\cite{MR1971037}, and \cite{MR1990477}. General results of this
type, as well as fairly strong refinements, have been known for the
Harmonic-Map heat-flow for some time (see \cite{MR826871} and
\cite{MR1438148}). As with the heat flow, Theorem~\ref{maint} does
not prevent the formation of multiple singularities on top of each
other. To the contrary, such bubble-trees are to be expected (see
\cite{MR1735698}).

Finally, we remark that this result is sharp. In the case of $M = \S^2$
there exists a lowest energy nontrivial harmonic map, namely the
stereographic projection $Q$. The results in \cite{KST} assert that
blow-up with a rescaled $Q$ profile can occur for initial data with
energy arbitrarily close to $\E[Q]$. We also refer the reader to
\cite{RS} for blow-up results near higher energy harmonic maps.

\subsection{The Question of Scattering}

Next we consider the scattering problem, for which we start with a
finite energy wave map $\Phi$ in $\R^2 \times [0,\infty)$ and consider
its behavior as $t \to \infty$. Here by scattering we simply mean the
fact that $\Phi \in S$; if that is the case, then the structure theorem for
large energy Wave-Maps in
\cite{STed}  shows that $\Phi$ behaves at $\infty$ as a
linear wave after an appropriate renormalization.

We can select a ball $B$ so that outside $B$ the energy is small,
$\E_{B^c}[ \Phi] < \frac1{10} E_0$.  Then outside the influence cone of
$B$, the solution $\Phi$ behaves like a small data wave map. Hence it
remains to study it within the influence cone of $B$. After scaling
and translation, it suffices to work with wave maps $\Phi$ in the
outgoing cone $C_{[1,\infty)}$ which have finite energy, i.e.
\begin{equation}
\lim_{t \to \infty} \E_{S_t}[\Phi] < \infty \ .
\label{eout}\end{equation}
We prove the following result:

\begin{thm}
Let $\Phi: C_{[1,\infty)} \to \M$ be a $C^\infty$ wave map which
satisfies \eqref{eout}. Then exactly
one of the following possibilities must hold:

\begin{enumerate}[A)]
\item There exists a sequence of points $(t_n,x_n) \in C_{[1,\infty)}$ and
 scales $r_n$ with
\[
t_n \to \infty \ , \qquad \limsup \frac{|x_n|}{t_n} < 1 \ , \qquad
 \lim \frac{r_n}{t_n}  = 0
\]
so that the  rescaled sequence of wave-maps
\begin{equation}
    \Phi^{(n)}(t,x) \ = \  \Phi\big(t_n+r_n t,x_n + r_n x\big) \ ,
\label{rescalings}
\end{equation}
converges strongly in $H^1_{loc}$ to a Lorentz transform of an entire
Harmonic-Map of
nontrivial energy:
\begin{equation}
 \Phi^{(\infty)}:\mathbb{R}^{2}\to \M \ , \qquad   0 <
 \lp{\Phi^{(\infty)}}{\dot{H}^1(\mathbb{R}^2)} \leqslant \lim_{t \to \infty} \E_{S_t}[\Phi] \ .
 \notag
\end{equation}

\item For each $\epsilon > 0$ there exists $t_0 > 1$ and a
wave map extension
\[
\Phi: \R^2 \times [t_0,\infty) \to \M
\]
with bounded energy
\begin{equation}
   \E[\Phi]\leq(1+\epsilon^8)\lim_{t \to \infty} \E_{S_t}[\Phi]
\label{casebens}\end{equation} and energy dispersion,
\begin{equation}
    \sup_{t \in [t_0,\infty)} \sup_{k \in \Z}\big( \| P_k \Phi(t)\|_{L^\infty_x} +
    2^{-k} \|P_k \partial_t \Phi(t)\|_{L^\infty_x}\big) \leq \epsilon \ .
    \label{casebeds}
\end{equation}
\end{enumerate}
\label{maints}\end{thm}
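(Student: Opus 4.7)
The plan is to mirror the proof of Theorem~\ref{maint}, replacing the ingoing cone $C_{(0,1]}$ and finite-time blow-up analysis by the outgoing cone $C_{[1,\infty)}$ and long-time analysis; the key structural input is again flux decay. Indeed, by the energy-flux identity \eqref{en-flux} and the hypothesis \eqref{eout}, the map $t\mapsto \E_{S_t}[\Phi]$ is nondecreasing with a finite limit $\E_\infty := \lim_{t\to\infty}\E_{S_t}[\Phi]$, so the total outgoing flux is finite:
\[
\F_{[1,\infty)}[\Phi] \ = \ \E_\infty - \E_{S_1}[\Phi] \ < \ \infty \ , \qquad \F_{[T,\infty)}[\Phi] \ \to \ 0 \ \text{ as } \ T \to \infty \ .
\]
This replaces the flux-at-the-tip decay $\F_{[0,T]}[\Phi]\to 0$ used in Theorem~\ref{maint}, and every subsequent step is a reprise of a step from that proof.

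Next I would assume case A fails and construct the extension required by case B. Fix $\epsilon > 0$ and choose $t_0$ so that $\F_{[t_0,\infty)}[\Phi]$ is much smaller than a sufficiently high power of $\epsilon$ times $\E_\infty$. The flux smallness allows one to extend $\Phi[t_0]$ to Cauchy data on all of $\R^2$ with arbitrarily small additional energy outside $S_{t_0}$; solving via the small-data theory of Theorem~\ref{tsmall} outside the cone, and using finite speed of propagation together with uniqueness to match $\Phi$ inside, produces a wave-map extension $\Phi:\R^2\times[t_0,\infty)\to\M$ satisfying \eqref{casebens}.

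It remains to establish the dispersive bound \eqref{casebeds} for this extension. Suppose it fails: then there exist sequences $k_n\in\Z$, $\tau_n\geq t_0$ and $y_n\in\R^2$ with $|P_{k_n}\Phi(\tau_n,y_n)|\gtrsim \epsilon$ (or the analogous statement for $\partial_t\Phi$). Smallness of the exterior extension forces $\limsup|y_n|/\tau_n < 1$, and, taking $t_0\to\infty$, $\tau_n\to\infty$. Setting $r_n := 2^{-k_n}$, the rescaled maps $\Phi^{(n)}$ given by \eqref{rescalings} have uniformly bounded energy and a uniformly nontrivial frequency-zero component. Flux decay forces $r_n/\tau_n\to 0$: if not, a concentration at scale comparable to $\tau_n$ would contribute a uniform lower bound to $\F_{[\tau_n,2\tau_n]}[\Phi]$, contradicting flux decay.

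The \emph{hard step}, and the main obstacle, is the bubble extraction: extracting a weak $H^1_{\rm loc}$ limit $\Phi^{(\infty)}$ from $\Phi^{(n)}$ and showing it is, after a Lorentz boost, a nontrivial finite-energy harmonic map $\R^2\to\M$, thereby realizing case A and contradicting our assumption. Vanishing of the rescaled flux on large truncated cones forces $L\Phi^{(\infty)} = \spartial\Phi^{(\infty)} = 0$ in the limiting null frame, so an appropriate Lorentz boost turns $\Phi^{(\infty)}$ into a stationary solution of the wave-maps equation, that is, a harmonic map. Upgrading weak $H^1_{\rm loc}$ convergence to strong convergence---needed to transfer nontriviality of the $P_0$ component into nontriviality of the energy---is the most delicate part; it is accomplished via local energy monotonicity derived from the flux decay, together with the equation for $\Phi^{(n)} - \Phi^{(\infty)}$, exactly in parallel with the corresponding step of Theorem~\ref{maint}. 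Once case B is in hand, Theorem~\ref{ed} applied to the extension produces the $S$-norm bound and hence scattering in the sense stated.
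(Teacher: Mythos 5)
Your high-level strategy (flux decay, extension outside the cone, contradiction leading to bubble extraction) matches the paper's, but the outline has several genuine gaps precisely at the steps you label as hard. First, setting $r_n := 2^{-k_n}$ as the concentration scale is not justified and is not how the argument can close: the compactness result (Proposition~\ref{pcompact}) requires \emph{locally small} energy $\leq \frac{1}{10}E_0$ together with $\|X\Phi^{(n)}\|_{L^2}\to 0$ for a time-like $X$, and the frequency $k_n$ at which energy dispersion fails carries no information about the spatial scale at which the energy reaches the small-data threshold. In the paper the failure of \eqref{casebeds} is used only to show that a uniform amount of energy sits in a strictly time-like region (after first \emph{eliminating the null concentration scenario} $|x_n|/t_n\to 1$, $k_n\to\infty$, which requires the fixed-time weighted bound \eqref{enxea} and is not a consequence of exterior smallness as you claim); the actual scales $r_n$ are then produced by a separate pigeonholing argument (Lemma~\ref{lcscales}) on the concentration function $r_n(t)=\inf\{r: \E_{B(x,r)}\geq E_0/10\}$.

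Second, your mechanism for identifying the limit is wrong: flux decay controls $L\Phi$ and $\spartial\Phi$ only on the \emph{lateral boundary} of the cone, not in the interior, so it cannot force $L\Phi^{(\infty)}=\spartial\Phi^{(\infty)}=0$ (and a map with $L\Phi=\spartial\Phi=0$ would not be a boosted harmonic map in any case). The essential analytic input, absent from your proposal, is the space-time estimate $\dint \rho^{-1}|X_0\Phi|^2\,dxdt\lesssim E$ obtained by contracting the energy-momentum tensor with $X_\epsilon=\rho_\epsilon^{-1}((t+\epsilon)\partial_t+r\partial_r)$, which uses the flux smallness $F\lesssim \epsilon^{1/2}E$ to absorb the boundary terms; after a final pigeonholing this yields $X_0\Phi^{(n)}\to 0$ in $L^2_{loc}$, which is what the compactness proposition converts into the rigidity $X\Phi^{(\infty)}=0$. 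Finally, you omit the second branch of the dichotomy entirely: if energy never concentrates at small scales, the limit is a nontrivial finite-energy map with $X_0\Psi=0$, i.e.\ a self-similar solution, equivalently a harmonic map from $\H^2$; ruling this out is a substantial separate step (Theorem~\ref{ss_thm}, via the weighted bound \eqref{drho0_hyp_eng}, Qing's boundary regularity and Lemaire's uniqueness theorem) without which the proof does not close.
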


In case B) Theorem~\ref{ed} then implies that scattering holds as $t
\to \infty$.  Thus if scattering does not hold then we must be in case
A). As a corollary, it follows that scattering can only fail for
wave-maps $\Phi$ whose energy satisfies
\begin{equation}
\E[\Phi] \geq \E_0(\M) \ .
\end{equation}
Thus Corollary \ref{tccor} can be strengthened to

\begin{cor}[Scattering for Large Data Wave-Maps]
The following statements hold:

\begin{enumerate}[A)]
\item Assume that there are no nontrivial finite energy harmonic  maps
$\Phi^{(\infty)}:\mathbb{R}^{2}\to \M$. Then for any finite energy data  $\Phi[0]$
for the wave map equation \eqref{main_eq} there exists a global
solution $\Phi \in S$.

\item Suppose that there exists a lowest energy nontrivial harmonic map,
with energy $\E_{0}(\M)$. Then for any data $\Phi[0]$ for the wave
map equation \eqref{main_eq} with energy below $\E_{0}(\M)$ there
exists a global solution $\Phi \in S$.
\end{enumerate}
\label{tcthm}\end{cor}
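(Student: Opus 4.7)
The plan is to combine the finite-time regularity of Corollary \ref{tccor} with the scattering dichotomy of Theorem \ref{maints}, using the hypotheses to rule out the bubbling alternative at infinity, and then invoke Theorem \ref{ed} to convert the resulting energy dispersion into an $S$-bound.

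First I would establish global existence. Under either hypothesis of the corollary, Corollary \ref{tccor} produces a global smooth solution $\Phi : \R^2 \times [0,\infty) \to \M$; in case B) the energy condition $\E[\Phi] < \E_0(\M)$ is precisely the hypothesis of part C) of Corollary \ref{tccor}. Next I would localize to a forward light cone: choose a ball $B \subset \R^2$ large enough that $\E_{\R^2 \setminus B}[\Phi[0]] < E_0/10$. By finite speed of propagation and Theorem \ref{tsmall}, the restriction of $\Phi$ to the exterior of the forward influence cone of $B$ is a small-energy wave map, hence belongs to $S$. After a space-time translation and rescaling, it then suffices to bound $\Phi$ on a cone of the form $C_{[1,\infty)}$, and the hypothesis \eqref{eout} is automatic since $\E[\Phi] < \infty$.

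Now I would apply Theorem \ref{maints} to $\Phi$ on $C_{[1,\infty)}$ and rule out its case A in both settings. In case A) of the corollary the non-existence of nontrivial harmonic maps $\R^2 \to \M$ immediately precludes the bubble $\Phi^{(\infty)}$. In case B) of the corollary, case A of Theorem \ref{maints} would produce a Lorentz transform of a nontrivial harmonic map with
\[
\lp{\Phi^{(\infty)}}{\dot{H}^1(\R^2)} \ \leq\ \lim_{t \to \infty} \E_{S_t}[\Phi] \ \leq\ \E[\Phi] \ <\ \E_0(\M) \ ,
\]
contradicting the two geometric facts recalled in the paper: any nontrivial harmonic map into $\M$ has energy $\geq \E_0(\M)$, and Lorentz boosting strictly raises the energy of such a map. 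Hence case B of Theorem \ref{maints} must hold.

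With $\epsilon = \epsilon(\E[\Phi])$ supplied by Theorem \ref{ed}, case B delivers $t_0 > 1$ and an extension of $\Phi$ to $\R^2 \times [t_0,\infty)$ satisfying \eqref{casebens}--\eqref{casebeds}, and Theorem \ref{ed} then yields $\Phi \in S[t_0,\infty)$ with $\|\Phi\|_S \leq F((1+\epsilon^8)\E[\Phi])$. The bounded sub-interval $[1,t_0]$ is handled by finite speed of propagation: covering $S_t$ at each $t \in [1,t_0]$ by balls of sufficiently small radius, each local piece is a small-energy wave map treatable by Theorem \ref{tsmall}, and the finitely many contributions assemble into a finite $S$-norm. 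Combined with the small-data exterior piece produced in the second step, this yields the desired global bound $\Phi \in S$.

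The main obstacle lies in the third step: the identification of $\E_0(\M)$ as the sharp scattering threshold rests entirely on the strict energy-increase of a nontrivial stationary harmonic map under Lorentz boost, which is what matches the output of case A of Theorem \ref{maints} against the energy bound in hypothesis B). Everything else is an assembly of the three large theorems, Theorem \ref{tsmall}, Theorem \ref{ed}, and Theorem \ref{maints}, via finite speed of propagation.
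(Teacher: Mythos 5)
Your proposal is correct and follows essentially the same route as the paper: global existence from Corollary \ref{tccor}, localization to an outgoing cone via a ball capturing all but a small amount of energy, exclusion of alternative A) of Theorem \ref{maints} using the nonexistence hypothesis (resp.\ the energy threshold together with the fact that Lorentz boosts increase the energy of a nontrivial harmonic map), and then Theorem \ref{ed} applied to the energy-dispersed extension of alternative B). The only material you add beyond the paper's sketch is the explicit finite-speed-of-propagation patching on the bounded interval $[1,t_0]$, which is a standard and correct completion.
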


Ideally one would also like to have a constructive bound of the form
\begin{equation}
 \| \Phi\|_{S} \leq F( \E[\Phi]) \ . \notag
\end{equation}
This does not seem to follow directly from our results. Furthermore,
our results do not seem to directly imply scattering for non-compact
targets in the absence of harmonic maps (only scattering of the
projection). Results similar to Corollary \ref{tcthm} were
previously established in spherically symmetric and equivariant
cases, see \cite{MR94j:58044} and \cite{CKM}.\ret

Finally, we would like to remark that results similar in spirit to
the ones of this paper and \cite{STed} have been recently announced.
In the case where $\mathcal{M}=\mathbb{H}^n$, the hyperbolic spaces,
globally regularity and scattering follows from the program of Tao
\cite{Tao_LWM1}, \cite{Tao_LWM2}, \cite{Tao_LWM3}, \cite{Tao_LWM4}, \cite{Tao_LWM5}
and \cite{Tao_LWM6}. In the case where the target
$\mathcal{M}$ is a negatively curved Riemann surface, Krieger and Schlag \cite{KSc}
provide global regularity and scattering via a modification of the
Kenig-Merle method \cite{KM},
which uses as a key component suitably defined Bahouri-Gerard \cite{BG}
type decompositions. \ret

\textbf{Acknowledgements:} The authors would like to thank Manos Grillakis, Sergiu Klainerman,
 Joachim Krieger, Matei Machedon,  Igor Rodnianski, and Wilhelm Schlag
for many stimulating discussions over the years regarding the wave-map
problem. We would also especially like to thank Terry Tao
for several key discussions on the nature of induction-on-energy
type proofs.\ret

\section{Overview of the Proof}
The proofs of Theorem~\ref{maint} and Theorem~\ref{maints}
are almost identical.  The three main building blocks of both proofs
are (i) weighted energy estimates, (ii) elimination of finite energy self-similar solutions,
and (iii) a compactness result.

Our main energy estimates are established in Section~\ref{energy_sect}.
Beside the standard energy bounds involving the
$\partial_t$ vector field we also use the vector field
\begin{equation}
  X_0 \ = \ \frac{1}{\rho}\left(t\partial_t
    + r\partial_r \right), \qquad \rho = \sqrt{t^2-r^2}
 \label{X_field0}
\end{equation}
as well as its time translates. This leads to a family of weighted
energy estimates, see  \eqref{enxe} below, which has appeared in
various guises in the literature. The first such reference we are
aware of is the work of Grillakis \cite{MR1777636}. Our approach is
closest to the work of Tao \cite{Tao_LWM1} and \cite{Tao_LWM2} (see
also Chapter 6.3 of \cite{MR2233925}). These bounds are also
essentially identical to the ``rigidity estimate'' of Kenig-Merle
\cite{KM}. It should be noted that estimates of this type are
probably the \emph{only} generally useful time-like component
concentration bounds possible for non-symmetric wave equations, and
they will hold for any Lagrangian field equation on $(2+1)$
Minkowski space.

Next, we introduce a general argument to rule out the existence of
finite energy self-similar solutions to \eqref{main_eq}. Such
results are essentially standard in the literature (e.g. see the
section on wave-maps in \cite{MR1674843}), but we take some care
here to develop a version which applies to the setup of our work.
This crucially uses the energy estimates developed in
Section~\ref{energy_sect}, as well as a boundary regularity result
of J. Qing  for harmonic maps (see \cite{MR1223710}).

The compactness result in Proposition~\ref{pcompact}, proved in
Section~\ref{X_sect}, allows us to produce the strongly convergent
subsequence of wave maps in case A) of Theorems~\ref{maint},
\ref{maints}. It applies to local sequences $\Phi^{(n)}$ of small
energy wave maps with the additional property that $X \Phi^{(n)} \to
0$ in $L^2$ for some time-like vector field $X$. This estimate uses
only the standard small energy theory of \cite{Tataru_WM1}, and is
completely independent of the more involved regularity result in our
companion paper \cite{STed}.

Given these three building blocks, the proof of
Theorems~\ref{maint} and \ref{maints} presented in
Section~\ref{theproof} proceeds as follows:\ret


\step{1}{Extension and scaling}  We assume that part B) of
Theorem~\ref{maint}, respectively Theorem~\ref{maints} does not hold
for a wave map $\Phi$ and for some $\epsilon > 0$. We construct an
extension of $\Phi$ as in part B) satisfying \eqref{caseben},
respectively \eqref{casebens}.  Then the energy dispersion relation
\eqref{casebed}, respectively \eqref{casebeds} must fail. Thus, we can
find sequences $t_n$, $x_n$, and $k_n$ so that
\begin{equation}
    | P_{k_n} \Phi(t_n,x_n)| + 2^{-k_n} |P_{k_n} \partial_t \Phi(t_n,x_n)|
    > \epsilon \ , \notag
\end{equation}
with $t_n \to 0$ in the case of Theorem~\ref{maint}, respectively
$t_n \to \infty$ in the case of Theorem~\ref{maints}. In addition,
the flux-energy relation
\[
\F_{[t_1,t_2]}[\Phi] = \E_{S_{t_2}}[\Phi] - \E_{S_{t_1}}[\Phi]
\]
shows that in the case of Theorem~\ref{maint} we have
\[
 \lim_{t_1,t_2 \to 0}  \F_{[t_1,t_2]}[\Phi] = 0
\]
and in the case of Theorem~\ref{maints} we have
\[
 \lim_{t_1,t_2 \to \infty}  \F_{[t_1,t_2]}[\Phi] = 0 \ .
\]
This allows us to also choose $\epsilon_n \to 0$ such that
\begin{equation}
  \F_{[\epsilon_n t_n,t_n]}[\Phi] \leq \epsilon_n^{\frac12}\E[\Phi] \ . \notag
\end{equation}
Rescaling to $t=1$ we produce the sequence of wave maps
\[
 \Phi^{(n)}(t,x) = \Phi( t_n t, t_n x)
\]
in the increasing regions $C_{[\epsilon_n,1]}$ so that
\begin{equation}
 \F_{[\epsilon_n,1]}[\Phi^{(n)}] \leq \epsilon_n^{\frac12} \E[\Phi] \ , \notag
\end{equation}
and also points $x_n \in \R^2$ and frequencies $k_n \in \Z$ so that
\begin{equation}
 | P_{k_n} \Phi^{(n)}(1,x_n)| + 2^{-k_n} |P_{k_n} \partial_t \Phi^{(n)}(1,x_n)|
    > \epsilon \ . \label{bad_disp_bound}
\end{equation}
From this point on, the proofs of Theorems~\ref{maint},\ref{maints}
are identical.\ret


\step{2}{Elimination of null concentration scenario} Using the fixed
time portion of the $X_0$ energy bounds we eliminate the case of
null concentration
\[
 |x_n| \to 1 \ , \qquad k_n \to \infty
\]
in estimate \eqref{bad_disp_bound}, and show that the sequence of
maps $\Phi^{(n)}$ at time $t=1$  must either have low frequency
concentration in the range:
\[
 m(\epsilon,E) <  k_n < M(\epsilon,E) \ ,
\qquad |x_n| < R(\epsilon,E)
\]
or high frequency concentration strictly inside the cone:
\[
 k_n \geq M(\epsilon,E),
\qquad |x_n| < \gamma(\epsilon,E) < 1 \ .
\]\ret


\step{3}{Time-like energy concentration} In both remaining cases
above we show that a nontrivial portion of the energy of
$\Phi^{(n)}$ at time $1$ must be located inside a smaller cone,
\begin{equation}
\frac12\int_{t=1, |x| < \gamma_1 }  \big( |\partial_t \Phi^{(n)}|^2
+ |\nabla_x \Phi^{(n)}|^2\big)\, dx \geq E_1 \notag
\end{equation}
where $E_1 = E_1(\epsilon,E)$ and
$\gamma_1 = \gamma_1(\epsilon,E) < 1$.\ret


\step{4}{Uniform propagation of non-trivial time-like energy} Using
again the $X_0$ energy bounds we propagate the above time-like
energy concentration for $\Phi^{(n)}$ from time $1$ to smaller times
$t \in [\epsilon_n^\frac12,\epsilon_n^\frac14]$,
\begin{equation}
    \frac12\int_{ |x| < \gamma_2(\epsilon,E) t}
     \big( |\partial_t \Phi^{(n)}|^2
    + |\nabla_x \Phi^{(n)}|^2\big)\, dx \geq E_0(\epsilon,E) \ ,
    \qquad t \in [\epsilon_n^\frac12,\epsilon_n^\frac14] \ . \notag
\end{equation}
At the same time, we obtain bounds for $X_0 \Phi^{(n)}$ outside
smaller and smaller neighborhoods of the cone, namely
\begin{equation}
    \int_{C_{[\epsilon_n^\frac12,\epsilon_n^\frac14]}^{\epsilon_n}} \rho^{-1}
    |X_0 \Phi^{(n)}|^2 dx dt \lesssim 1 \ . \notag
\end{equation}\ret


\step{5}{Final rescaling} By a pigeonhole argument and rescaling we
end up producing another sequence of maps, denoted still by
$\Phi^{(n)}$,  which are sections the original wave map $\Phi$ and
are defined in increasing regions $C_{[1,T_n]}$, $T_n = e^{|\ln
\epsilon_n|^\frac12}$, and satisfy the following three properties:
\begin{align}
    \E_{S_t}[\Phi^{(n)}] &\approx E, \ \  \ t \in [1,T_n]&  \text{(Bounded
    Energy)}\notag\\
    \E_{S_t^{(1-\gamma_2)t}}[\Phi^{(n)}] &\geq E_2, \ \  t \in
    [1,T_n]&
     \text{(Nontrivial Time-like Energy)}\notag\\
    \dint_{\!\!C_{[1,T_n]}^{ \epsilon_n^\frac12}} \frac{1}{\rho} |X_0
    \Phi^{(n)}|^2 dx dt &\lesssim  |\log \epsilon_n|^{-\frac12}&
    \text{(Decay to Self-similar Mode)} \notag
\end{align}\ret


\step{6}{Isolating the concentration scales} The compactness result
in Proposition~\ref{pcompact} only applies to wave maps with energy
below the threshold $E_0$ in the small data result. Thus we need to
understand on which scales can such concentration occur. Using
several additional pigeonholing arguments we show that one of the
following two scenarios must occur:

\begin{enumerate}[i)]
\item (Energy Concentration) On a subsequence there exist $(t_n,x_n)
\to (t_0,x_0)$, with $(t_0,x_0)$ inside
$C_{[\frac{1}{2},\infty)}^\frac{1}{2}$, and scales $r_n \to 0$ so that we have
\begin{align}
    \E_{B(x_n,r_n)}[\Phi^{(n)}](t_n) &= \frac{1}{10} E_0 \ ,
    \notag\\
    \E_{B(x,r_n)}[\Phi^{(n)}](t_n) &\leq \frac{1}{10} E_0 \ , \qquad x \in B(x_0,r)
     \ , \notag\\
    r_n^{-1} \int_{t_n-r_n/2}^{t_n+r_n/2} \int_{B(x_0,r)} |X_0
    \Phi^{(n)}|^2 dx dt &\to 0 \ . \notag
\end{align}

\item (Non-concentration) For each $j\in \N$ there exists an $r_j>0$ such that for every
$(t,x)$ inside $C_j=C_{[1,\infty)}^1\cap\{2^j<t<2^{j+1}\}$ one has
\begin{align}
    \E_{B(x,r_j)}[\Phi^{(n)}](t) &\leq \frac{1}{10} E_0 \ , \qquad \forall (t,x) \in C_j \ ,
      \notag\\
    \E_{S_t^{(1-\gamma_2)t}}[\Phi^{(n)}](t) &\geq E_2 \ ,  \notag\\
    \dint_{C_j} |X_0 \Phi^{(n)}|^2 dx dt &\to 0 \ .
    \notag
\end{align}
uniformly in $n$.
\end{enumerate}\ret


\step{7}{The compactness argument} In case i) above we consider the
rescaled wave-maps
\[
 \Psi^{(n)}(t,x) = \Phi^{(n)}(t_n + r_n t, x_n+r_n x)
\]
and show that on a subsequence they converge locally in the energy
norm to a finite energy nontrivial wave map $\Psi$ in $\R^2 \times
[-\frac12,\frac12]$ which satisfies $X(t_0,x_0) \Psi = 0$. Thus
$\Psi$ must be a Lorentz transform of a nontrivial harmonic map.

In case ii) above we show directly that the sequence $\Phi^{(n)}$
converges locally on a subsequence in the energy norm to finite
energy nontrivial wave map $\Psi$, defined in  the interior of a translated cone
$C_{[2,\infty)}^2$, which
satisfies $X_0 \Psi = 0$. Consequently, in hyperbolic coordinates we
may interpret $\Psi$ as a nontrivial harmonic map
\[
\Psi: \H^2 \to \M \ .
\]
Compactifying this and using conformal invariance, we obtain
a non-trivial finite energy harmonic map
\[
\Psi: \mathbb{D}^2 \to \M \
\]
from the unit disk $\mathbb{D}^2$, which according to the  estimates of
Section~\ref{energy_sect} obeys the additional weighted energy bound:
\begin{equation}
    \int_{\mathbb{D}^2} |\nabla_{x}\Phi|^2 \frac{dx}{1-r}
    \ < \ \infty \ . \notag
\end{equation}
But such maps do not exist via combination of a theorem of Qing \cite{MR1223710} and
a theorem of Lemaire \cite{Lem}.\ret


\section{Weighted Energy Estimates for the Wave
  Equation}\label{energy_sect}

In this Section we prove the main energy decay estimates. The
technique we use is the standard one of contracting the energy-momentum tensor:
\begin{equation}
  T_{\alpha\beta}[\Phi] \ = \ m_{ij}(\Phi)\big[ \partial_\alpha\phi^i\partial_\beta\phi^j
  - \frac{1}{2} g_{\alpha\beta}\, \partial^\gamma\phi^i \partial_\gamma\phi^j \big]
  \ , \label{EM_tensor}
\end{equation}
with well chosen vector-fields. Here $\Phi = (\phi^1,\ldots,\phi^n)$
is a set of local coordinates on the target manifold $(\M,m)$ and
$(g_{\alpha\beta})$ stands for the Minkowski metric. The main two
properties of $T_{\alpha\beta}[\Phi] $ are that it is divergence
free $\nabla^\alpha T_{\alpha\beta}=0$, and also that it obeys the
positive energy condition $T(X,Y)\geqslant 0$ whenever both
$g(X,X)\leqslant 0$ and $g(Y,Y)\leqslant 0$. This implies that
contracting $T_{\alpha\beta}[\Phi] $ with timelike/null
vector-fields will result in good energy estimates on characteristic
and space-like hypersurfaces.

If $X$ is some vector-field, we can form its associated momentum
density (i.e. its Noether current)
\begin{equation}
  {}^{(X)}\!\!P_\alpha \ = \ T_{\alpha\beta}[\Phi] X^\beta \ . \notag
\end{equation}
This one form obeys the divergence rule
\begin{equation}
  \nabla^\alpha {}^{(X)}\!\!P_\alpha \ = \ \frac{1}{2} T_{\alpha\beta}[\Phi] {}^{(X)}\!\!\pi^{\alpha\beta}
  \ , \label{divergence}
\end{equation}
where ${}^{(X)}\!\!\pi_{\alpha\beta}$ is the deformation tensor of $X$,
\begin{equation}
  {}^{(X)}\!\!\pi_{\alpha\beta} \ = \ \nabla_\alpha X_\beta + \nabla_\beta X_\alpha \ . \notag
\end{equation}
A simple computation shows that one can also express
\begin{equation}
  {}^{(X)}\!\!\pi \ = \ \mathcal{L}_X g \ . \notag
\end{equation}
 This latter formulation is
very convenient when dealing with coordinate derivatives. Recall that
in general one has:
\begin{equation}
  (\mathcal{L}_X g)_{\alpha\beta} \ = \ X(g_{\alpha\beta}) +
  \partial_\alpha (X^\gamma) g_{\gamma \beta} + \partial_\beta (X^\gamma) g_{\alpha \gamma} \ . \notag
\end{equation}

Our energy estimates are obtained by integrating the relation
\eqref{divergence} over cones $C_{[t_1,t_2]}^\delta$.  Then from
\eqref{divergence} we obtain, for $\delta \leq t_1 \leq t_2$:
\begin{equation}
    \int_{S_{t_2}^\delta}
    {}^{(X)}\!\!P_0 \ dx +
    \frac{1}{2}\dint_{C_{[t_1,t_2]}^\delta}\!\!\!\!\!\!\!
    T_{\alpha\beta}[\Phi] {}^{(X)}\!\!\pi^{\alpha\beta}  \
    dxdt \\
    = \int_{S_{t_1}^\delta}\!\!\!  {}^{(X)}\!\!P_0 \
    dx + \int_{\partial C_{[t_1,t_2]}^\delta}
    \!\!\!\!\!\! {}^{(X)}\!\!P_L \ dA \ ,
    \label{enest}
\end{equation}
where $dA$ is an appropriately normalized (Euclidean) surface area
element on the lateral boundary of the cone $r=t-\delta$.

The standard energy estimates come from contracting
$T_{\alpha\beta}[\Phi]$ with $Y=\partial_t$. Then we have
\[
 {}^{(Y)}\!\!\pi = 0 \ , \qquad    {}^{(Y)}\!\!P_0 = \frac12(|\partial_t
 \Phi|^2 + |\nabla_x \Phi|^2) \ , \qquad   {}^{(Y)}\!\!P_L = \frac14 |L
 \Phi|^2 + \frac12 |\spartial \Phi|^2 \ .
\]
Applying \eqref{enest} over $C_{[t_1,t_2]}$ we obtain the energy-flux relation
\eqref{en-flux} used in the introduction.
Applying \eqref{enest} over $C_{[\delta,1]}^\delta$ yields
\begin{equation}
    \int_{\partial C_{[\delta,1]}^\delta} \frac14 |L \Phi|^2 +
    \frac12 |\spartial \Phi|^2  \ dA \leq \E[\Phi] \ .
    \label{enyd}
\end{equation}

It will also be necessary for us to have a version of the usual energy estimate adapted to
the hyperboloids $\rho=\sqrt{t^2-r^2}=const$. Integrating the divergence of
the ${}^{(Y)}\!\!P_\alpha$ momentum density over regions of the form
$\mathcal{R}=\{\rho\geqslant \rho_0,t\leqslant t_0\}$ we have:
\begin{equation}
    \int_{\{\rho=\rho_0\}\cap\{t\leqslant t_0\}}  {}^{(Y)}\!\!P^\alpha  dV_\alpha \leq \E[\Phi] \ ,
    \label{hyp_eng}
\end{equation}
where the integrand on the LHS denotes the interior product of ${}^{(Y)}\!\!P$
with the Minkowski volume element. To express this estimate in a useful way,
we use the hyperbolic coordinates (CMC foliation):
\begin{align}
  t \ &= \ \rho \cosh(y) \ , &r \ &= \ \rho \sinh(y) \ , &\theta \ &=
  \ \Theta \ . \ \label{hyp_coords}
\end{align}
In this system of coordinates, the Minkowski metric becomes
\begin{equation}
  -dt^2 + dr^2 + r^2d\theta^2  \ = \ -d\rho^2 +
  \rho^2\big( dy^2 + \sinh^2(y)d\Theta^2 \big) \ . \label{metric}
\end{equation}
A quick calculation shows that the contraction on line
\eqref{hyp_eng} becomes the one-form
\begin{equation}
    {}^{(Y)}\!\!P^\alpha  dV_\alpha \ = \ T(\partial_\rho,\partial_t)\rho^2 dA_{\mathbb{H}^2} \ ,
    \qquad dA_{\mathbb{H}^2} \ =\ \sinh(y)dy d\Theta \ .
\end{equation}
The area element $dA_{\mathbb{H}^2}$ is that of the hyperbolic plane $\mathbb{H}^2$.
To continue, we note that:
\begin{equation}
    \partial_t \ = \ \frac{t}{\rho}\partial_\rho - \frac{r}{\rho^2}\partial_y \ , \notag
\end{equation}
so in particular
\begin{equation}
    T(\partial_\rho,\partial_t) \ = \ \frac{\cosh(y)}{2}|\partial_\rho\Phi|^2
    - \frac{\sinh(y)}{\rho}\partial_\rho\Phi\cdot\partial_y\Phi
    + \frac{\cosh(y)}{2\rho^2}\Big(
    |\partial_y\Phi|^2 + \frac{1}{\sinh^2(y)}|\partial_\Theta\Phi|^2
    \Big)   \ . \notag
\end{equation}
Letting $t_0 \to \infty$ in \eqref{hyp_eng} we obtain a useful
consequence of this, namely a weighted hyperbolic space estimate for
special solutions to the wave-map equations, which will be used in the
sequel to rule out the existence of non-trivial finite energy
self-similar solutions:

\begin{lem}\label{hyp_eng_lem}
Let $\Phi$ be a finite energy smooth wave-map in the interior of the cone $C$. 
Assume also that $\partial_\rho\Phi\equiv 0$.
Then one has:
\begin{equation}
      \frac{1}{2}\int_{\mathbb{H}^2}|\nabla_{\mathbb{H}^2}\Phi|^2 \cosh(y)
    dA_{\mathbb{H}^2}
    \ \leq \ \mathcal{E}[\Phi] \ . \label{drho0_hyp_eng}
\end{equation}
Here:
\begin{equation}
    |\nabla_{\mathbb{H}^2}\Phi|^2 \ =\  |\partial_y\Phi|^2 + \frac{1}{\sinh^2(y)}|\partial_\Theta\Phi|^2
    \ , \notag
\end{equation}
is the covariant energy density for the hyperbolic metric.
\end{lem}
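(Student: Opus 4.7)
The plan is to obtain \eqref{drho0_hyp_eng} by specializing the hyperboloidal energy estimate \eqref{hyp_eng} and then sending its time cutoff to infinity.

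First, I recall how \eqref{hyp_eng} itself arises. Fix $\rho_0>0$ and $t_0>\rho_0$, and apply the divergence identity \eqref{divergence} with $X=Y=\partial_t$ on the spacetime region
\[
    \mathcal{R}(\rho_0,t_0) \;=\; \{(t,x):\ \rho\geq \rho_0,\ t\leq t_0\}.
\]
Since $\partial_t$ is Killing, ${}^{(Y)}\!\!\pi=0$ and $\nabla^\alpha{}^{(Y)}\!\!P_\alpha=0$. The past boundary of $\mathcal{R}(\rho_0,t_0)$ is the hyperboloid piece $\{\rho=\rho_0,\ t\leq t_0\}$ with future-directed unit normal $\partial_\rho$, and its future boundary is the flat disk $\{t=t_0,\ r\leq\sqrt{t_0^2-\rho_0^2}\}$; Stokes' theorem equates the two fluxes, and the disk flux is at most $\mathcal{E}[\Phi]$. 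The dominant energy condition guarantees $T(\partial_\rho,\partial_t)\geq 0$ since both vector fields are future-timelike inside $C$, which will be needed in the limiting step below.

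Second, I substitute $\partial_\rho\Phi\equiv 0$ into the formula for $T(\partial_\rho,\partial_t)$ displayed just before the lemma. The two terms containing $\partial_\rho\Phi$ drop out, leaving
\[
    T(\partial_\rho,\partial_t) \;=\; \frac{\cosh(y)}{2\rho^2}|\nabla_{\mathbb{H}^2}\Phi|^2.
\]
Reading the induced area form on $\{\rho=\rho_0\}$ off of \eqref{metric} as $\rho_0^2\,dA_{\mathbb{H}^2}$, the factors of $\rho_0$ cancel and \eqref{hyp_eng} becomes
\[
    \int_{\{y\,\leq\,\mathrm{arccosh}(t_0/\rho_0)\}} \tfrac{1}{2}\cosh(y)|\nabla_{\mathbb{H}^2}\Phi|^2\,dA_{\mathbb{H}^2} \;\leq\; \mathcal{E}[\Phi],
\]
with integrand independent of $\rho_0$. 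Keeping $\rho_0$ fixed and sending $t_0\to\infty$ exhausts $\mathbb{H}^2$ by the increasing sublevel sets $\{y\leq y_0\}$, and monotone convergence (justified by nonnegativity of the integrand) delivers \eqref{drho0_hyp_eng}. The only mildly delicate points are the bookkeeping of orientations and the identification of the induced volume form $\rho_0^2\,dA_{\mathbb{H}^2}$ on the hyperboloid, both of which follow immediately from the hyperbolic form of the metric \eqref{metric}.
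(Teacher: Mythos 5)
Your proposal is correct and follows essentially the same route as the paper: integrate the $\partial_t$-energy identity over the region between the hyperboloid $\{\rho=\rho_0\}$ and the slice $\{t=t_0\}$ to get \eqref{hyp_eng}, compute the hyperboloid flux in the coordinates \eqref{hyp_coords} so that with $\partial_\rho\Phi\equiv 0$ the integrand reduces to $\tfrac{1}{2}\cosh(y)\,|\nabla_{\mathbb{H}^2}\Phi|^2\,dA_{\mathbb{H}^2}$, and then let $t_0\to\infty$ by monotone convergence. The only additions you make are the explicit sign/positivity bookkeeping, which the paper leaves implicit.
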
\ret

Our next order of business is to obtain decay estimates for
time-like components of the energy density. For this we  use the
timelike/null vector-field
\begin{equation}
  X_\epsilon \ = \ \frac{1}{\rho_\epsilon}\left((t+\epsilon)\partial_t
    + r\partial_r \right), \qquad \rho_\epsilon = \sqrt{(t+\epsilon)^2-r^2} \ .
 \label{X_field}
\end{equation}
In order to gain some intuition, we first consider the case of
$X_0$. This is most readily expressed in the system of hyperbolic coordinates
\eqref{hyp_coords} introduced above.
One  easily checks that the coordinate derivatives turn out to be
\begin{align}
  \partial_\rho \ &= \ X_0 \ , &\partial_y \ = \ r\partial_t +
  t\partial_r \ . \notag
\end{align}
In particular, $X_0$ is uniformly timelike with $g(X_0,X_0)=-1$, and
one should expect it to generate good energy estimate on time slices
$t=const$.    In
the system of coordinates \eqref{hyp_coords} one also has that
\begin{align}
  \mathcal{L}_{X_0} g \ = \ 2\rho \big( dy^2 + \sinh^2(y)d\Theta^2
  \big) \ . \notag
\end{align}
Raising this, one then computes
\begin{equation}
  {}^{(X_0)}\!\!\pi^{\alpha\beta} \ = \
  \frac{2}{\rho^3} \big( \partial_y\otimes\partial_y +
  \sinh^{-2}(y)\partial_\Theta\otimes\partial_\Theta \big) \ . \notag
\end{equation}
Therefore, we have the contraction identity:
\begin{equation}
  \frac{1}{2} T_{\alpha\beta}[\Phi] {}^{(X_0)}\!\!\pi^{\alpha\beta} \ = \
  \frac{1}{\rho} |X_0 \Phi|^2 \ . \notag
\end{equation}
To compute the components of ${}^{(X_0)}\!\!P_0$ and
${}^{(X_0)}\!\!P_L$ we use the associated optical functions
\begin{align}
  u \ &= \ t - r \ , &v \ &= \
  t + r \ . \notag
\end{align}
Notice that $\rho^2 = u v$. Also, simple
calculations show that
\begin{align}
  X_0 \ &= \ \frac{1}{\rho} \left(
    \frac{1}{2}v L + \frac{1}{2}u \bL \right) \ ,
  &\partial_t \ = \ \frac{1}{2} L + \frac{1}{2} \bL \ .
  \label{vect_decomp}
\end{align}
Finally, we record here the components of $T_{\alpha\beta}[\Phi]$ in
the null frame
\begin{align}
  T(L,L) \ &= \ |L\Phi|^2 \ , &T(\bL,\bL) \ &= \ |\bL\Phi|^2 \ ,
  &T(L,\bL) \ &= \ |\spartial \Phi|^2 \ . \notag
\end{align}
By combining the above calculations, we see that we may compute
\begin{align}
  {}^{(X_0)}\!\!P_0  &=  T(\partial_t , X_0)  =
  \frac{1}{4} \left(\frac{v}{u}\right)^\frac{1}{2}
  |L\Phi|^2 + \frac{1}{4}\Big[
  \left(\frac{v}{u}\right)^\frac{1}{2} +
  \left(\frac{u}{v}\right)^\frac{1}{2} \Big]
  |\spartial \Phi|^2 + \frac{1}{4}
  \left(\frac{u}{v}\right)^\frac{1}{2}
  |\bL\Phi|^2 \ , \notag \\
  {}^{(X_0)}\!\!P_L  &=  T(L , X_0)  =  \frac{1}{2}
  \left(\frac{v}{u}\right)^\frac{1}{2} |L\Phi|^2 +
  \frac{1}{2} \left(\frac{u}{v}\right)^\frac{1}{2}
  |\spartial \Phi|^2 \ . \notag
\end{align}
These are essentially the same as the components of the usual energy
currents ${}^{(\partial_t)}\!\!P_0 $ and ${}^{(\partial_t)}\!\!P_L $
modulo ratios of the optical functions $u$ and $v$.

One would expect to get nice space-time estimates for
$X_0\Phi$ by integrating \eqref{divergence} over the interior cone
$r\leqslant t\leqslant 1$. The only problem  is that the
boundary terms  degenerate rather severely when $\rho\to 0$.  To
avoid this we simply redo everything with the shifted version
$X_\epsilon$ from line \eqref{X_field}. The above formulas
remain valid with $u$, $v$ replaced by their time shifted versions
\begin{align}
  u_\epsilon \ &= \ (t+\epsilon) - r \ , &v_\epsilon \ &= \
  (t+\epsilon) + r \ . \notag
\end{align}

Furthermore, notice that for small $t$ one has in the region $r\leqslant t$ the bounds
\begin{align}
    \left(\frac{v_\epsilon}{u_\epsilon}\right)^\frac{1}{2} \approx 1 \ ,
    \qquad \left(\frac{u_\epsilon}{v_\epsilon}\right)^\frac{1}{2} \ \approx 1 \ ,
    \qquad 0 <t \leq \epsilon \ .
    \notag
\end{align}
Therefore, one has in $r\leqslant t$ that
\begin{equation}
  {}^{(X_\epsilon)}\!\!P_0 \ \approx
  {}^{(\partial_t)}\!\!P_0 \ , \qquad  0< t \leq \epsilon \notag \ .
\label{jcomp}\end{equation}

In what follows we work with a wave-map $\Phi$ in $C_{[\epsilon,1]}$.
We denote its total energy and flux by
\[
E = \E_{S_1}[\Phi] \ , \qquad F = \F_{[\epsilon,1]}[\Phi] \ .
\]
In the limiting case $F=0$, $\epsilon = 0$ one could apply
\eqref{enest} to obtain
\[
  \int_{S_{t_2}^0}
  {}^{(X_\epsilon)}\!\!P_0 \ dx + \dint_{C_{[t_1,t_2]}^0}\
  \frac{1}{\rho_\epsilon} |X_\epsilon \Phi|^2\
  dxdt \\
  = \int_{S_{t_1}^0}\  {}^{(X_\epsilon)}\!\!P_0 \
  dx \ .
\]
By \eqref{jcomp}, letting  $t_1 \to 0$ followed by $\epsilon \to 0$
and taking supremum over $0 < t_2\leqslant 1$
we would get the model estimate
\[
\sup_{t \in (0,1]}  \int_{S_{t}^0}
  {}^{(X_0)}\!\!P_0 \ dx + \dint_{C_{[0,1]}^0}\
  \frac{1}{\rho} |X_0 \Phi|^2\
  dxdt \leq E \ .
\]

However, here we need to deal with a small nonzero flux. Observing
that
\[
{}^{(X_\epsilon)}\!\!P_L \ \lesssim \
  \epsilon^{-\frac{1}{2}}  {}^{(\partial_t)}\!\!P_L \ ,
\]
from  \eqref{enest} we obtain the weaker bound
\[
 \int_{S_{t_2}^0}
  {}^{(X_\epsilon)}\!\!P_0 \ dx + \dint_{C_{[t_1,t_2]}^0}\
  \frac{1}{\rho_\epsilon} |X_\epsilon \Phi|^2\
  dxdt \\
  \lesssim  \int_{S_{t_1}^0}\  {}^{(X_\epsilon)}\!\!P_0 \
  dx  +  \epsilon^{-\frac{1}{2}} F \ .
\]
Letting $t_1 = \epsilon$ and taking supremum over $\epsilon\leqslant t_2\leqslant 1$
we obtain
\begin{equation}
\sup_{t \in (\epsilon,1]}  \int_{S_{t}^0}
  {}^{(X_\epsilon)}\!\!P_0 \ dx + \dint_{C_{[0,1]}^0}\
  \frac{1}{\rho_\epsilon} |X_\epsilon \Phi|^2\
  dxdt \lesssim  E + \epsilon^{-\frac{1}{2}} F
\label{enxe}\end{equation}
A consequence of this is the following, which will be used to rule out
the case of asymptotically  null pockets of energy:

\begin{lem}
Let $\Phi$ be a smooth wave-map in the cone $C_{(\epsilon,1]}$
which satisfies the flux-energy relation $F \lesssim \epsilon^\frac12 E$. Then
\begin{equation}
  \int_{S_{1}^0}  {}^{(X_\epsilon)}\!\!P_0 \ dx \lesssim E \ .
\label{enxea}\end{equation}
\end{lem}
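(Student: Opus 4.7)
The plan is to read off the statement as an immediate consequence of the weighted energy bound \eqref{enxe} derived immediately above. Specializing the supremum over $t \in (\epsilon,1]$ on the left-hand side of \eqref{enxe} to the endpoint $t=1$ and discarding the non-negative spacetime integral, one obtains
\[
\int_{S_{1}^0} {}^{(X_\epsilon)}\!\!P_0 \, dx \;\lesssim\; E + \epsilon^{-\frac12} F
\]
for any smooth wave-map on $C_{(\epsilon,1]}$ with total energy $E$ and flux $F$ over $\partial C_{[\epsilon,1]}$. Substituting the hypothesis $F \lesssim \epsilon^{1/2} E$ converts the loss $\epsilon^{-1/2} F$ into a quantity of size $E$, which gives the claimed bound.

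There is no real obstacle here: the essential work has already been carried out in deriving \eqref{enxe} by integrating the divergence identity \eqref{divergence} for the shifted vector field $X_\epsilon$ over $C_{[\epsilon,1]}^0$, together with the pointwise comparison ${}^{(X_\epsilon)}\!\!P_L \lesssim \epsilon^{-1/2}\, {}^{(\partial_t)}\!\!P_L$ on the lateral boundary. The reason for working with $X_\epsilon$ rather than the unshifted $X_0$ in the first place was precisely to keep this boundary term finite, at the cost of introducing the $\epsilon^{-1/2} F$ contribution on the right. The flux-energy hypothesis $F \lesssim \epsilon^{1/2} E$ of the lemma is the sharp threshold that allows this loss to be absorbed into $E$, and this is all that is required to conclude.
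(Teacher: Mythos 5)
Your proof is correct and is precisely the argument the paper intends: the lemma is stated as an immediate consequence of \eqref{enxe}, obtained by evaluating the supremum at $t=1$, dropping the nonnegative bulk term, and absorbing $\epsilon^{-1/2}F$ into $E$ via the hypothesis $F \lesssim \epsilon^{1/2}E$. Nothing further is needed.
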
\ret

Next, we show can replace $X_\epsilon$ by $X_0$ in \eqref{enxe} if we restrict
the integrals on the left to $r < t-\epsilon$. In this region we
have
\[
   {}^{(X_\epsilon)}\!\!P_0 \approx   {}^{(X_0)}\!\!P_0 \ ,
\qquad \rho_\epsilon \approx \rho \ .
\]
Using second member above, a direct computation shows that in $r < t-\epsilon$
\[
    \frac{1}{\rho} |X_0 \Phi|^2
    \lesssim  \frac{1}{\rho_\epsilon} |X_\epsilon \Phi|^2
    + \frac{\epsilon^2}{\rho^3} |\partial_t \Phi|^2
\]
and also
\[
    \int_{C^\epsilon_{(\epsilon,1]}}  \frac{\epsilon^2}{\rho^3} |\partial_t
    \Phi|^2 dx dt \leq \int_{C^\epsilon_{(\epsilon,1]} }
    \frac{\epsilon^\frac12}{t^\frac32} |\partial_t  \Phi|^2 dx dt \lesssim E \ .
\]
Thus, we have proved the following estimate which will be used to
conclude that rescaling of $\Phi$ are asymptotically stationary, and
also used to help trap uniformly time-like pockets of energy:

\begin{lem}
Let $\Phi$ be a smooth wave-map in the cone $C_{(\epsilon,1]}$ which satisfies the flux-energy relation $F \lesssim \epsilon^\frac12 E$.
Then we have
\begin{equation}
\sup_{t \in (\epsilon,1]}  \int_{S_{t}^\epsilon}
  {}^{(X_0)}\!\!P_0 \ dx + \dint_{C_{[\epsilon,1]}^\epsilon}\
  \frac{1}{\rho} |X_0 \Phi|^2
  dxdt \lesssim  E \ .
\label{xphi}\end{equation}
\end{lem}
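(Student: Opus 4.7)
The plan is to deduce \eqref{xphi} from the earlier bound \eqref{enxe} by replacing the field $X_\epsilon$ with $X_0$, at the cost of moving off the lateral boundary of the cone by a distance $\epsilon$. First, under the flux hypothesis $F \lesssim \epsilon^{\frac12} E$, the right hand side $E + \epsilon^{-\frac12} F$ of \eqref{enxe} is $\lesssim E$, so that estimate reads
\[
\sup_{t \in (\epsilon,1]} \int_{S_{t}^0} {}^{(X_\epsilon)}\!\!P_0\, dx + \dint_{C_{[\epsilon,1]}^0} \frac{1}{\rho_\epsilon} |X_\epsilon\Phi|^2\, dxdt \ \lesssim \ E \ .
\]

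Next I would restrict both integrals to the shifted subregion $r \leq t-\epsilon$, i.e.\ to $S_t^\epsilon$ and $C_{[\epsilon,1]}^\epsilon$. There $u_\epsilon = u + \epsilon \approx u$ (because $u \geq \epsilon$) and $v_\epsilon \approx v$, so the null-frame formulas for ${}^{(X_\epsilon)}\!\!P_0$ and ${}^{(X_0)}\!\!P_0$ displayed above the lemma yield ${}^{(X_\epsilon)}\!\!P_0 \approx {}^{(X_0)}\!\!P_0$ pointwise on $S_t^\epsilon$; likewise $\rho_\epsilon \approx \rho$ on $C_{[\epsilon,1]}^\epsilon$. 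This already converts the $\sup_t$ term in the restricted version of the above bound into the $\sup_t$ term of \eqref{xphi}.

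For the spacetime integral I would use the pointwise identity $\rho X_0 = \rho_\epsilon X_\epsilon - \epsilon\, \partial_t$, which together with the comparison $\rho_\epsilon \approx \rho$ gives
\[
\frac{1}{\rho}|X_0\Phi|^2 \ \lesssim \ \frac{1}{\rho_\epsilon}|X_\epsilon\Phi|^2 + \frac{\epsilon^2}{\rho^3}|\partial_t\Phi|^2 \ .
\]
To bound the error term I use $\rho^2 = (t-r)(t+r) \geq \epsilon t$ on $r \leq t-\epsilon$, which gives $\epsilon^2/\rho^3 \leq \epsilon^{\frac12} t^{-\frac32}$. Combined with the standard fixed-time energy bound $\int_{S_t^\epsilon}|\partial_t\Phi|^2\, dx \leq 2E$ and the elementary integral $\int_\epsilon^1 \epsilon^{\frac12} t^{-\frac32}\, dt \lesssim 1$, this produces $\dint_{C_{[\epsilon,1]}^\epsilon} \epsilon^2 \rho^{-3} |\partial_t\Phi|^2 \, dxdt \lesssim E$, which is what is needed to pass from the shifted bound to \eqref{xphi}.

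There is no significant analytic obstacle: the heavy lifting has been done in establishing \eqref{enxe}, and what remains is the routine bookkeeping of comparing $X_\epsilon$ to $X_0$ and $\rho_\epsilon$ to $\rho$ on the subregion where $u \geq \epsilon$. This is essentially the computation already sketched in the two paragraphs immediately preceding the lemma statement.
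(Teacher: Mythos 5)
Your proposal is correct and follows essentially the same route as the paper: apply \eqref{enxe} with the flux hypothesis, restrict to $r< t-\epsilon$ where ${}^{(X_\epsilon)}\!\!P_0 \approx {}^{(X_0)}\!\!P_0$ and $\rho_\epsilon \approx \rho$, and control the error via the pointwise bound $\rho^{-1}|X_0\Phi|^2 \lesssim \rho_\epsilon^{-1}|X_\epsilon\Phi|^2 + \epsilon^2\rho^{-3}|\partial_t\Phi|^2$ together with $\epsilon^2\rho^{-3}\le \epsilon^{1/2}t^{-3/2}$ and energy conservation. Your explicit identity $\rho X_0 = \rho_\epsilon X_\epsilon - \epsilon\,\partial_t$ is exactly the ``direct computation'' the paper leaves implicit.
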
\ret

Finally, we use the last lemma to propagate pockets of energy forward away from the
boundary of the cone. By \eqref{enest} for $X_0$ we have
\[
\int_{S_{1}^\delta}  {}^{(X_0)}\!\!P_0 \ dx \leq
\int_{S_{t_0}^\delta}
 {}^{(X_0)}\!\!P_0 \ dx + \int_{\partial C_{[t_0,1]}^\delta}\
  {}^{(X_0)}\!\!P_L \ dA \ , \qquad  \epsilon \leq \delta < t_0 < 1 \ .
\]
We consider the two components of $ {}^{(X_0)}\!\!P_L$ separately. For the
angular component, by \eqref{enyd} we have the bound
\[
 \int_{\partial C_{[t_0,1]}^\delta}\  \left(\frac{u}{v}\right)^\frac{1}{2}
  |\spartial \Phi|^2 \ dA \lesssim \left(\frac{\delta}{t_0}\right)^\frac{1}{2}
\int_{\partial C_{[t_0,1]}^\delta}\
  |\spartial \Phi|^2 \ dA \lesssim \left(\frac{\delta}{t_0}\right)^\frac{1}{2} E \ .
\]
For the $L$ component a direct computation shows that
\[
   |L \Phi|  \lesssim \left(\frac{u}{v}\right)^\frac{1}{2} |X_0 \Phi|
+ \left(\frac{u}{v}\right) | \bL \Phi| \ .
\]
Thus we obtain
\[
\int_{S_{1}^\delta}  {}^{(X_0)}\!\!P_0 \ dx
\lesssim\int_{S_{t_0}^\delta}\!\!\!\!\!
 {}^{(X_0)}\!\!P_0 \ dx +
\left(\frac{\delta}{t_0}\right)^\frac{1}{2}\!\! E
 +    \int_{\partial C_{[t_0,1]}^\delta}\!\!\Big(
 \left(\frac{u}{v}\right)^\frac{1}{2}  |X_0 \phi|^2 +
 \left(\frac{u}{v}\right)^\frac{3}{2} | \bL \Phi|^2\Big)
   dA \ .
\]
For the last term we optimize with respect to $\delta \in [\delta_0,\delta_1]$ to obtain:

\begin{lem}
  Let $\Phi$ be a smooth wave-map in the cone $C_{(\epsilon,1]}$ which
  satisfies the flux-energy relation $F \lesssim \epsilon^\frac12
  E$. Suppose that $ \epsilon \leq \delta_0 \ll \delta_1 \leq t_0$.
  Then
\begin{equation}
    \int_{S_{1}^{\delta_1}}  {}^{(X_0)}\!\!P_0 \ dx \lesssim
    \int_{S_{t_0}^{\delta_0}}  {}^{(X_0)}\!\!P_0 \ dx + \left(
    \left(\frac{\delta_1}{t_0}\right)^\frac{1}{2}+
    (\ln(\delta_1/\delta_0))^{-1}\right) E \ .
    \label{energyup}
\end{equation}
\label{penergyup}
\end{lem}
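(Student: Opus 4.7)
I propose to apply the intermediate bound displayed immediately above the lemma, namely
\[
\int_{S_{1}^{\delta}} {}^{(X_0)}\!\!P_0 \, dx \ \lesssim \ \int_{S_{t_0}^{\delta}} {}^{(X_0)}\!\!P_0 \, dx + (\delta/t_0)^{1/2} E + B(\delta) \ , \qquad \delta \in [\epsilon, t_0] \ ,
\]
where
\[
B(\delta) \ := \ \int_{\partial C_{[t_0,1]}^{\delta}} \Big( (u/v)^{1/2} |X_0\Phi|^2 + (u/v)^{3/2} |\bL \Phi|^2 \Big) dA \ ,
\]
at a well-chosen $\delta^{*} \in [\delta_0, \delta_1]$ produced by a logarithmic pigeonhole. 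Using the monotonicity relations $S_1^{\delta_1} \subset S_1^{\delta^*}$, $S_{t_0}^{\delta^*} \subset S_{t_0}^{\delta_0}$, and $(\delta^*/t_0)^{1/2} \leq (\delta_1/t_0)^{1/2}$, evaluation of the intermediate bound at $\delta = \delta^*$ gives
\[
\int_{S_1^{\delta_1}} {}^{(X_0)}\!\!P_0 \, dx \ \lesssim \ \int_{S_{t_0}^{\delta_0}} {}^{(X_0)}\!\!P_0 \, dx + (\delta_1/t_0)^{1/2} E + B(\delta^*) \ ,
\]
so the task reduces to producing $\delta^* \in [\delta_0, \delta_1]$ with $B(\delta^*) \lesssim E/\ln(\delta_1/\delta_0)$.

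To find such $\delta^*$, I would show that $\int_{\delta_0}^{\delta_1} B(\delta)\, d\delta/\delta \lesssim E$ and then apply the mean value property. Parametrizing $\partial C^{\delta}$ by $(t, \theta)$ via $r = t - \delta$ gives $u = \delta$, $v = 2t - \delta \approx 2t$, $\rho = \sqrt{uv} \approx \sqrt{\delta t}$, $r \approx t$, and $dA \approx r\, dt\, d\theta$. Switching the order of integration and changing variables $(\delta, t, \theta) \mapsto (t, r = t-\delta, \theta)$ (with unit Jacobian) turns the $|X_0\Phi|^2$ contribution into a bulk integral over the annular slab $\{\delta_0 \leq t-r \leq \delta_1, \, t_0 \leq t \leq 1\}$:
\[
\int_{\delta_0}^{\delta_1} \int_{\partial C_{[t_0,1]}^{\delta}} (u/v)^{1/2} |X_0 \Phi|^2 \, dA \, \frac{d\delta}{\delta} \ \approx \ \dint_{\delta_0 \leq t-r \leq \delta_1, \, t_0 \leq t \leq 1} \frac{|X_0 \Phi|^2}{\rho}\, dx\, dt \ \lesssim \ E \ ,
\]
where the last step is \eqref{xphi} (applicable because $\delta_0 \geq \epsilon$). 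The $|\bL \Phi|^2$ contribution carries the extra factor $(u/v)^{3/2}$, producing after the same change of variables an integrand of size $\lesssim \delta^{1/2} t^{-3/2} |\bL \Phi|^2 r$; bounding $\delta^{1/2} \leq \delta_1^{1/2}$ pointwise and using the fixed-time energy estimate $\int_{S_t} |\bL \Phi|^2\, dx \lesssim E$ yields
\[
\int_{\delta_0}^{\delta_1} \int_{\partial C_{[t_0,1]}^{\delta}} (u/v)^{3/2} |\bL \Phi|^2 \, dA \, \frac{d\delta}{\delta} \ \lesssim \ \delta_1^{1/2} E \int_{t_0}^{1} t^{-3/2} \, dt \ \lesssim \ (\delta_1/t_0)^{1/2} E \ \lesssim \ E \ .
\]
Summing gives the required control of the logarithmic average.

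The main technical point is the Fubini/change-of-variables bookkeeping: verifying that the boundary weight $(u/v)^{1/2}$ combines with $dA\, d\delta/\delta$ to exactly reproduce the factor $\rho^{-1}$ appearing in \eqref{xphi}, and that the extra $(u/v)$ factor in the $|\bL \Phi|^2$ piece provides sufficient $\delta$-decay for the naive energy bound to suffice. Given the asymptotics $u \sim \delta$, $v \sim t$, $\rho \sim \sqrt{\delta t}$, $r \sim t$ on the relevant portion of $\partial C^{\delta}$ (where $\delta \leq \delta_1 \leq t_0 \leq t$), these are routine, and the remainder is monotonicity in $\delta$ plus a one-line pigeonhole.
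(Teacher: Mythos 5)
Your proposal is correct and follows essentially the same route as the paper: the intermediate boundary-flux bound, the identification $u=\delta$ on $\partial C^{\delta}$ so that $(u/v)^{1/2}\,d\delta/\delta$ reproduces $\rho^{-1}\,dx\,dt$ and the $X_0$ term is controlled by \eqref{xphi}, the bound $u^{-1}(u/v)^{3/2}\leq \delta_1^{1/2}t^{-3/2}$ with plain energy estimates for the $\bL\Phi$ term, and a logarithmic pigeonhole to select $\delta^{*}\in[\delta_0,\delta_1]$. The paper phrases the pigeonhole directly as a bulk integral over $C^{\delta_0}_{[t_0,1]}\setminus C^{\delta_1}_{[t_0,1]}$ with weight $1/u$, which is the same computation as your Fubini bookkeeping.
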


To prove this lemma, it suffices to choose
$\delta \in [\delta_0,\delta_1]$ so that
\[
 \int_{\partial C_{[t_0,1]}^\delta}\ \left[
 \left(\frac{u}{v}\right)^\frac{1}{2}  |X_0 \phi|^2 +
 \left(\frac{u}{v}\right)^\frac{3}{2} | \bL \Phi|^2\right]
  \ dA \lesssim |\ln(\delta_1/\delta_0)|^{-1} E \ .
\]
This follows by pigeonholing  the estimate
\[
 \int_{ C_{[t_0,1]}^{\delta_0} \setminus  C_{[t_0,1]}^{\delta_1} }\
\frac{1}{u}\left[ \left(\frac{u}{v}\right)^\frac{1}{2}  |X_0 \phi|^2 +
 \left(\frac{u}{v}\right)^\frac{3}{2} | \bL \Phi|^2 \right] dx dt
\lesssim E \ .
\]
The first term is estimated directly by \eqref{xphi}.  For the second
we simply use energy bounds since  in the domain of integration
we have the relation
\[
\frac{1}{u} \left(\frac{u}{v}\right)^\frac{3}{2} \leq
\frac{\delta_1^\frac12}{t^\frac32} \ .
\]\ret


\section{Finite Energy Self Similar Wave-Maps}\label{SS_sect}
The purpose of this section is to prove the following Theorem:

\begin{thm}[Absence of non-trivial finite energy self similar wave-maps in $2D$]\label{ss_thm}
Let $\Phi$ be a finite energy solution to the wave-map equation \eqref{main_eq}
defined in the forward cone $C$.
Suppose also
that $\partial_\rho\Phi\equiv 0$.
Then $\Phi\equiv const$.
\end{thm}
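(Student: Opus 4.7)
The self-similarity hypothesis $\partial_\rho \Phi \equiv 0$ means $\Phi$ depends only on the hyperbolic coordinates $(y,\Theta)$, so I want to view $\Phi$ first as a harmonic map from the hyperbolic plane and then, by conformal invariance of the $2D$ Dirichlet energy, as a harmonic map from the unit disk. The weighted estimate of Lemma \ref{hyp_eng_lem} will then supply an extra boundary integrability condition which, combined with boundary regularity and uniqueness results from the harmonic map literature, forces $\Phi$ to be constant.

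\textbf{Step 1: reduction to a hyperbolic harmonic map.} Using the coordinates \eqref{hyp_coords} the Minkowski metric decomposes as in \eqref{metric}, so the wave map equation for a $\rho$-independent $\Phi$ reduces to the Euler--Lagrange equation for the energy
\[
\tfrac12 \int_{\mathbb{H}^2} |\nabla_{\mathbb{H}^2}\Phi|^2 \, dA_{\mathbb{H}^2},
\]
i.e.\ $\Phi$ is a weakly harmonic map from $\mathbb{H}^2$ into $\M$. Lemma \ref{hyp_eng_lem} shows this hyperbolic energy is finite and, more importantly, that the stronger weighted bound
\[
\int_{\mathbb{H}^2} |\nabla_{\mathbb{H}^2}\Phi|^2 \cosh(y)\, dA_{\mathbb{H}^2} \ \leq \ 2\mathcal{E}[\Phi] \ < \ \infty
\]
holds.

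\textbf{Step 2: passage to the disk.} Using the conformal equivalence $\mathbb{H}^2 \cong \mathbb{D}^2$ given by $r=\tanh(y/2)$, and the conformal invariance of the $2D$ harmonic map equation and Dirichlet energy, $\Phi$ pulls back to a finite energy (weakly) harmonic map $\Psi:\mathbb{D}^2 \to \M$. Under this change of variables $\cosh(y)\, dA_{\mathbb{H}^2}$ pushes forward to a measure comparable to $(1-r)^{-1}dx$ near $\partial \mathbb{D}^2$, so the weighted bound above becomes
\[
\int_{\mathbb{D}^2} |\nabla_x \Psi|^2 \, \frac{dx}{1-r} \ < \ \infty.
\]

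\textbf{Step 3: boundary regularity and rigidity.} By standard interior $\epsilon$-regularity for stationary harmonic maps in $2D$, $\Psi$ is smooth in $\mathbb{D}^2$. The weighted integrability above is much stronger than the mere finite-energy condition: it forces the energy on annuli $\{1-2^{-k} \leq r \leq 1-2^{-k-1}\}$ to be summable in $k$, which in particular gives $\Psi \in W^{1,2}$ up to the boundary in a suitable trace sense and rules out concentration of energy at any boundary point. Qing's boundary regularity theorem \cite{MR1223710} for harmonic maps with continuous boundary data (applied after verifying its hypotheses via the weighted bound) then yields that $\Psi$ extends continuously to $\overline{\mathbb{D}^2}$. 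A short computation using the weighted bound and the mean value of the angular derivative on concentric circles shows moreover that the trace $\Psi|_{\partial \mathbb{D}^2}$ must be constant: indeed, if the tangential derivative of the trace were nonzero on a set of positive measure, a lower bound on $\int |\partial_\theta \Psi|^2$ on circles $r=\text{const}$ close to $1$ would contradict $\int |\nabla \Psi|^2 (1-r)^{-1}dx < \infty$. Finally, Lemaire's theorem \cite{Lem} on harmonic maps from the disk with constant boundary values forces $\Psi \equiv \text{const}$, and hence $\Phi \equiv \text{const}$.

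\textbf{Main obstacle.} The delicate point is Step 3: one must verify the hypotheses of Qing's result from the weighted bound and then rigorously upgrade this bound to the statement that the boundary trace is constant. The rest of the argument is essentially a matter of assembling known tools (conformal invariance, Qing, Lemaire) once the symmetry reduction and weighted estimate of Lemma \ref{hyp_eng_lem} are in hand.
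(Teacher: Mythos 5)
Your proposal follows essentially the same route as the paper: reduction to a harmonic map on $\mathbb{H}^2$, the weighted estimate of Lemma \ref{hyp_eng_lem}, conformal passage to the disk with the weight $(1-r)^{-1}$, constancy of the boundary trace, and then Qing plus Lemaire. The only adjustment needed is the ordering in your Step 3: the constancy of the trace (which the paper extracts, exactly as you suggest, by pigeonholing radii $r_n \nearrow 1$ with $\int_{r=r_n}|\partial_\theta \Psi|^2\,dl = o_n(1)$ and then applying the trace theorem to conclude the trace has vanishing $\dot H^{1/2}$ norm) must come \emph{before} the appeal to Qing, since Qing's theorem takes regular boundary data as a hypothesis rather than producing continuity of the trace as a conclusion.
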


\begin{rem}
Theorems of this type are standard in the literature. The first such
reference we are aware of is in the work of Shatah--Tahvildar-Zadeh
\cite{MR96c:58049} on the equivariant case. This was later extended
by Shatah-Struwe \cite{MR1674843} to disprove the existence of
(initially) \emph{smooth} self-similar profiles. However, the
authors are not aware of an explicit reference in the literature
ruling out the  possibility of general (i.e. non-symmetric) finite
energy self-similar solutions to the system \eqref{main_eq},
although this statement has by now acquired the status of a
folk-lore theorem. It is this latter version
in the above form that is necessary in the context of the present work.
\end{rem}

\begin{rem}
  It is important to remark that the finite energy assumption cannot
  be dropped, and also that this failure is not due to interior
  regularity. That is, there are non-trivial $C^\infty$ self similar
  solutions to \eqref{main_eq} in $C$ but these solutions all have
  infinite energy. However, the energy divergence is marginal,
  i.e. the energy in $S_t^\delta$ only grows as $|\ln(\delta)|$ as
  $\delta \to 0$. Further, these solutions have finite energy
when viewed as harmonic maps in $\H^2$. 
\end{rem}\ret

\begin{proof}[Proof of Theorem \ref{ss_thm}]
Writing out the system \eqref{main_eq} in the coordinates
\eqref{hyp_coords}, and canceling the balanced factor of $\rho^{-2}$
on both sides we have that $\Phi$ obeys:
\begin{equation}
    \Delta_{\mathbb{H}^2}\Phi^a \ = \ - g^{ij}_{\mathbb{H}^2}\mathcal{S}^a_{bc}(\Phi)\partial_i\Phi^b\partial_j\Phi^c
    \ , \notag
\end{equation}
where in polar coordinates $g_{\mathbb{H}^2}=dy^2+\sinh^2(y)d\Theta^2$
is the standard hyperbolic metric. Thus, $\Phi^a$ is an entire
harmonic map $\Phi:\mathbb{H}^2\to \mathcal{M}$. By elliptic
regularity, such a map is smooth with uniform bounds on compact sets
(see \cite{MR1913803}). In particular, $\Phi$ is $C^\infty$ in the
interior of the cone $C$.  Therefore, from estimate
\eqref{drho0_hyp_eng} of Lemma \ref{hyp_eng_lem}, $\Phi$ enjoys the
additional weighted energy estimate:
\begin{equation}
    \int_{\mathbb{H}^2} |\nabla_{\mathbb{H}^2}\Phi|^2 \cosh(y)
    dA_{\mathbb{H}^2} \ \leq \ 2E(\Phi) \ = \
    \int_{S_t} \big(|\partial_t\Phi|^2 + |\nabla_x\Phi|^2 \big)
    dx
    \ , \label{nat_hyp_eng}
\end{equation}
for any fixed $t>0$.

To proceed further, it is  convenient to rephrase all of the above
in terms of the conformal compactification of $\mathbb{H}^2$. Using the pseudo-spherical
stereographic projection from hyperboloids to the unit disk $\mathbb{D}^2=\{t=0\}\cap\{x^2+y^2< 1\}$
in Minkowski space  (see Chapter II of \cite{MR1083149}):
\begin{equation}
    \pi(t,x^1,x^2) \ = \ (-1,0,0) - \frac{2\rho(t+\rho,x^1,x^2) }
{\big\langle(t+\rho,x^1,x^2),(t+\rho,x^1,x^2)\big\rangle} \ , \notag
\end{equation}
as well as the conformal invariance of the $2D$ harmonic
map equation and its associated Dirichlet energy,
we have from \eqref{nat_hyp_eng} that $\Phi$ induces a finite energy
harmonic map  $\Phi:\mathbb{D}^2\to\mathcal{M}$ with the additional property that:
\begin{equation}
    \frac{1}{2}\int_{\mathbb{D}^2} |\nabla_{x}\Phi|^2 \Big(\frac{1+r^2}{1-r^2}\Big) dx \
    < \ \infty  \ . \label{D_nat_hyp_eng}
\end{equation}

To conclude, we only need to show that such maps are trivial. Notice
that the weight $(1-r)^{-1}$ is \emph{critical} in this respect, for
there are many non-trivial\footnote{For example, if $\mathcal{M}$ is
a complex surface with conformal metric $m=\lambda^2 d\Phi
d\overline{\Phi}$, then the harmonic map equation becomes
$\partial_z\partial_{\overline{z}} \Phi= -2\partial_\Phi(\ln\lambda)
\partial_z\Phi\partial_{\overline{z}}\Phi$. Thus, any holomorphic
function $\Phi:\mathbb{D}^2\to\mathcal{M}$ suffices to produce an
infinite energy self similar ``blowup profile'' for \eqref{main_eq}.
See Chapter I of \cite{MR1474501}.} finite energy harmonic maps from
$\mathbb{D}^2\to\mathcal{M}$, regardless of the curvature of
$\mathcal{M}$, which are also uniformly smooth up to the boundary
$\partial\mathbb{D}^2$ and may therefore absorb an energy weight of
the form $(1-r)^{-\alpha}$ with $\alpha<1$.

From the bound \eqref{D_nat_hyp_eng}, we have that there exists a sequence of radii $r_n\nearrow 1$ such that:
\begin{equation}
    \int_{r=r_n} |\snabla \Phi|^2 dl \ = \ o_n(1) \ . \notag
\end{equation}
From the uniform boundedness of $\Phi(r)$ in $L^2(\mathbb{S}^1)$ and
the trace theorem, this implies that:
\begin{equation}
    \lp{\Phi|_{\partial \mathbb{D}^2}}{L^2(\mathbb{S}^1)} \ < \ \infty \ , \qquad
    \lp{\Phi|_{\partial \mathbb{D}^2}}{\dot{H}^\frac{1}{2}(\mathbb{S}^1)} \ = \ 0 \ , \notag
\end{equation}
so therefore $\Phi|_{\partial \mathbb{D}^2}\equiv const$. In
particular, $\Phi:\mathbb{D}^2\to\mathcal{M}$ is a finite energy
harmonic map  with smooth boundary values. By a theorem of J. Qing
(see \cite{MR1223710}), it follows that $\Phi$ has uniform
regularity up to $\partial \mathbb{D}^2$. Thus, $\Phi$ is $C^\infty$
on the closure $\overline{\mathbb{D}^2}$ with constant boundary
value. By Lemaire's uniqueness theorem (see Theorem 8.2.3 of
\cite{MR1351009}, and originally \cite{Lem}) $\Phi\equiv const$
throughout $\mathbb{D}^2$. By inspection of the coordinates
\eqref{hyp_coords} and the fact that $\partial_\rho\Phi\equiv 0$,
this easily implies that $\Phi$ is trivial in all of
$C$.
\end{proof}

\ret

\section{A Simple Compactness Result}\label{X_sect}

The main aim of this section is the following result:

\begin{prop}
 Let $Q$ be the unit cube, and  let $\Phi^{(n)}$ be a family of wave maps in  $3Q$
  which have small energy $\E[\Phi^{(n)}] \leq \frac{1}{10}E_0$ and so that
  \begin{equation}
    \|X \Phi^{(n)}\|_{L^2(3Q)} \to 0
  \label{time_decay}\end{equation}
for some time-like vector field $X$. Then there exists a
wave map $\Phi \in H^{\frac32-\epsilon}(Q)$ with $E(\phi) \leq\frac{1}{10}E_0 $ so that on a
subsequence we have the strong convergence
  \[
  \Phi^{(n)} \to \Phi \qquad \text { in } H^1(Q) \ .
  \]
\label{pcompact}
\end{prop}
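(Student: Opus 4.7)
The strategy is to extract a weak $H^1$ subsequential limit, identify it as a stationary (hence harmonic) map, and then promote weak to strong convergence via a Lagrangian identity driven by the hypothesis $X\Phi^{(n)}\to 0$.

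First, by the Lorentz invariance of the wave map system I reduce to the case $X=\partial_t$, so the hypothesis reads $\|\partial_t\Phi^{(n)}\|_{L^2(3Q)}\to 0$. Each $\Phi^{(n)}$ has energy well below the small data threshold of Theorem~\ref{tsmall}, so reading off data on a time slice of $3Q$, extending by cutoff to $\R^2$, and invoking finite speed of propagation, Theorem~\ref{tsmall} furnishes uniform $S$-bounds for $\Phi^{(n)}$ in a neighborhood of $Q$. In particular $\Phi^{(n)}$ is bounded in $H^1(3Q)$ and takes values in the compact manifold $\M$; extracting a subsequence, $\Phi^{(n)}\rightharpoonup\Phi$ weakly in $H^1(3Q)$ and strongly in every $L^p(3Q)$, $p<\infty$, by Rellich--Kondrachov, and pointwise almost everywhere after one further diagonal extraction. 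The hypothesis $\partial_t\Phi^{(n)}\to 0$ in $L^2$ forces $\partial_t\Phi\equiv 0$, so the limit depends only on $x$.

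Second, I identify $\Phi$ as a wave map. The Strichartz part of the $S$-norm from \cite{Tataru_WM1} gives a uniform $L^4_{t,x}$ bound on $\partial\Phi^{(n)}$, and hence a uniform $L^2_{t,x}$ bound on the nonlinearity $\mathcal{S}(\Phi^{(n)})\partial^\alpha\Phi^{(n)}\partial_\alpha\Phi^{(n)}$. Together with the pointwise $\M$-valued bound on $\mathcal{S}(\Phi^{(n)})$ and a.e.\ convergence, Vitali's theorem allows me to pass to the distributional limit in \eqref{main_eq1}. Since $\partial_t\Phi\equiv 0$, the limit satisfies the harmonic-map system $\Delta_x\Phi=-\mathcal{S}(\Phi)(\nabla_x\Phi,\nabla_x\Phi)$ on the spatial projection of $3Q$, and H\'elein's regularity theorem for finite-energy planar harmonic maps into compact targets gives $\Phi\in C^\infty_{\mathrm{loc}}$; in particular $\Phi\in H^{3/2-\epsilon}(Q)$, and weak lower semicontinuity of the Dirichlet energy yields $\E(\Phi)\leq\tfrac{1}{10}E_0$.

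Third, and this is the main point, I upgrade to strong $H^1(Q)$ convergence. Choose a smooth cutoff $\chi$ with $\chi\equiv 1$ on $Q$ and supported in $3Q$, and test \eqref{main_eq1} against $\chi(\Phi^{(n)}-\Phi)$. Integration by parts produces
\begin{equation}
    \int\chi\,\partial^\alpha\Phi^{(n)}\,\partial_\alpha(\Phi^{(n)}-\Phi)\,dxdt \ = \ -\!\int\chi\,\mathcal{S}(\Phi^{(n)})\,\partial^\alpha\Phi^{(n)}\,\partial_\alpha\Phi^{(n)}\,(\Phi^{(n)}-\Phi)\,dxdt + R_n, \notag
\end{equation}
where $R_n$ absorbs all terms where derivatives fall on $\chi$. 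The right side tends to $0$: the nonlinearity is uniformly in $L^2_{t,x}$ while $\Phi^{(n)}-\Phi\to 0$ in $L^2_{t,x}$ (and is uniformly bounded), and $R_n$ vanishes by strong $L^2$ convergence. Weak $H^1$ convergence then yields
\begin{equation}
    \int\chi\bigl(|\nabla_x\Phi^{(n)}|^2-|\partial_t\Phi^{(n)}|^2\bigr)\,dxdt \ \longrightarrow \ \int\chi\,|\nabla_x\Phi|^2\,dxdt. \notag
\end{equation}
Combined with $\partial_t\Phi^{(n)}\to 0$ in $L^2$ and $\partial_t\Phi=0$, this gives $\|\nabla_{t,x}\Phi^{(n)}\|_{L^2(Q)}\to\|\nabla_{t,x}\Phi\|_{L^2(Q)}$, so together with weak $H^1$ convergence one concludes that $\Phi^{(n)}\to\Phi$ strongly in $H^1(Q)$.

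The main obstacle I expect is justifying the passage to the limit in the quadratic nonlinearity on a bounded spacetime region. A crude $L^1_{t,x}$ bound from Cauchy--Schwarz is not quite enough to pair against a factor that only converges in $L^p$ for $p<\infty$; it is essential to exploit the Strichartz and algebra structure of the $S$-norm from \cite{Tataru_WM1} to place the nonlinearity in an honest dual space such as $L^2_{t,x}$, against which the strong $L^2$ convergence of $\Phi^{(n)}-\Phi$ can close the argument via Vitali.
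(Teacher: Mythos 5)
There is a genuine gap, and it sits exactly where you flagged your own worry: the claim that the $S$-norm of \cite{Tataru_WM1} gives a uniform $L^4_{t,x}$ bound on $\partial\Phi^{(n)}$, hence an $L^2_{t,x}$ bound on the nonlinearity. In $2+1$ dimensions this estimate is false at critical regularity: by scaling, $\|\partial u\|_{L^4_{t,x}(\R^{2+1})}$ for a free wave is controlled by $\|u[0]\|_{\dot H^{5/4}\times\dot H^{1/4}}$, a quarter of a derivative above energy, and $L^4_{t,x}$ is not even a wave-admissible Strichartz exponent when $d=2$ (the diagonal admissible exponent is $L^6$, with a further derivative loss). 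The scale-invariant $S$-norm therefore cannot control $\|\partial\Phi^{(n)}\|_{L^4_{t,x}}$; the quadratic expression $\partial^\alpha\Phi^{(n)}\partial_\alpha\Phi^{(n)}$ is only bounded in $L^\infty_tL^1_x$, which is not a dual space against which strong $L^p$ convergence ($p<\infty$) of $\Phi^{(n)}-\Phi$ can close your pairing. This breaks both of your key steps: the identification of the weak limit as a solution (passing to the limit in the nonlinearity under weak $H^1$ convergence is the notoriously hard ``weak compactness of wave maps'' problem, cf.\ Freire--M\"uller--Struwe, and cannot be done by Vitali here) and the energy identity that upgrades weak to strong convergence, since the right-hand side of your tested equation is not known to tend to zero.

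The paper's proof avoids the nonlinearity entirely at the compactness step. It uses the small-data theory only through the uniform bound $\|\chi\Phi^{(n)}\|_{X^{1,1/2}_\infty}\lesssim 1$, and then exploits that the time-like vector field $X$ has an elliptic symbol on the frequency region $\{\tau>(1-2\delta)|\xi|\}$ to write a microlocal partition $\chi=Q_{-1}X+Q_0+R$ with $q_0$ supported away from the characteristic cone. The $Q_0+R$ piece is then bounded in $H^{3/2-\epsilon}_{t,x}$ (since off the cone the $X^{1,1/2}$ weight is elliptic), hence precompact in $H^1$, while the $Q_{-1}X\Phi^{(n)}$ piece tends to zero in $H^1$ directly from the hypothesis \eqref{time_decay} because $Q_{-1}$ has order $-1$. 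Finally, the limit is identified as a wave map not by passing to the limit in \eqref{main_eq1} but by selecting a good time slice, solving from the limiting data, and invoking the weak Lipschitz stability \eqref{weaklip} of Theorem~\ref{tsmall}. If you want to rescue your outline you would need to replace the $L^4_{t,x}$ step by something of this microlocal or well-posedness-based character; as written, the argument does not close.
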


\begin{proof}
  The argument we present here is inspired by the one in Struwe's work
  on Harmonic-Map heat flow (see \cite{MR826871}). The main point there is
  that compactness is gained through the higher regularity afforded
  via integration in time. For a parabolic equation, integrating $L^2$
  in time actually gains a whole derivative by scaling, so in the heat
  flow case one can control a quantity of the form $\int\int
  |\nabla^2\Phi|^2 dxdt$ which leads to control of $\int
  |\nabla^2\Phi|^2 dx$ for many individual points in time.


  By the small data result in \cite{Tataru_WM1}, we have a similar (uniform) space-time bound
  $\Phi^{(n)}$ in any compact subset $K$ of $3Q$ which gains us $\frac{1}{2}$
  a derivative over energy, namely
  \begin{equation}
    \lp{\chi \Phi^{(n)}}{X^{1,\frac{1}{2}}_\infty} \
    \lesssim 1 \ , \qquad \text{supp } \chi \subset 3Q \label{X_bound}
  \end{equation}
  where $X^{1,\frac{1}{2}}_\infty$ here denotes the $\ell^\infty$
  Besov version of the critical inhomogeneous $X^{s,b}$ space.

  We obtain a strongly converging ${H}^1_{t,x}$ sequence of
  wave-maps through a simple frequency decomposition argument as
  follows. The vector field $X$ is timelike, therefore
  its symbol is elliptic in a region of the form $ \{\tau > (1-2\delta) |\xi|\}$
  with $\delta > 0$. Hence, given a cutoff function $\chi$ supported
  in $3Q$ and with symbol equal to $1$ in $2Q$, there exists a microlocal
  space-time decomposition
  \[
  \chi = Q_{-1}(D_{x,t},x,t) X + Q_0(D_{x,t},x,t) + R(D_{x,t},x,t)
  \]
  where the symbols $q_{-1} \in S^{-1}$ and $q_0 \in S^0$ are supported
  in $ 3Q \times \{\tau > (1-2\delta) |\xi|\}$, respectively $ 3Q\times \{\tau < (1-\delta) |\xi|\}$,
  while the remainder $R$ has symbol $r \in S^{-\infty}$ with spatial support in $3Q$.
 This yields  a decomposition for $\chi \Phi^{(n)}$,
\begin{equation}
   \chi \Phi^{(n)} \ = \  (Q_0(D,x) + R(D,x)) \Phi^{(n)}+
     Q_{-1}(D,x) X \Phi^{(n)}
    \ = \ \Phi^{(n)}_{bulk} + \mathcal{R}_n \ . \label{micro_decomp}
\end{equation}
Due to the support properties of $q_0$, for the main term
we have the bound
\[
\| \Phi^{(n)}_{bulk} \|_{{H}^{\frac{3}{2}-\epsilon}_{t,x}}
\lesssim \lp{\chi \Phi^{(n)}}{X^{1,\frac{1}{2}}_\infty} \lesssim 1 \ .
\]
On the other hand the remainder decays in norm,
\[
\lp{\mathcal{R}_n}{{H}^1_{t,x}}{} \lesssim \|X \Phi^{(n)}\|_{L^2(3Q)}
\ \longrightarrow_{n\to \infty} \ 0 \ .
\]
Hence on a subsequence we have the strong convergence
\[
\chi \Phi^{(n)} \to \Phi \qquad \text{in} \ H^1_{t,x}(3Q) \ .
\]
In addition, $\Phi$ must satisfy both
\[
\Phi \in H^{\frac32-\epsilon} \ , \qquad X \Phi = 0 \  \ \text{in\ \ } 2Q \ .
\]
It remains to show that $\Phi$ is a wave-map in $Q$ (in fact a ``strong'' finite energy wave-map
according to the definition of Theorem \ref{tsmall}). There exists a time section
$2Q_{t_0}$ close to the center of $2Q$ such that both
\[
    \lp{\Phi[t_0]}{(\dot{H}^1\times L^2)(2Q_{t_0})} < \infty \ , \qquad
    \lp{\Phi^{(n)}[t_0]-\Phi[t_0]}{(\dot{H}^s\times \dot{H}^{s-1})(2Q_{t_0})} \to 0 \ ,
\]
for some $s<1$.
Letting $\td{\Phi}$ be the solution to \eqref{main_eq} with data
$\Phi[t_0]$, from the weak stability result \eqref{weaklip} in Theorem~\ref{tsmall}
we have $\Phi^{(n)} \to \td{\Phi}$ in $H^s_x(Q_t)$ at fixed time for $s<1$. Thus
 $\td{\Phi}=\Phi$ in $H^s_{t,x}(Q)$ which suffices.
\end{proof}

We consider now the two cases we are interested in, namely when $X = \partial_t$
or $X = t \partial_t + x \partial_x$. If $X = \partial_t$
(as will be the case for a general time-like $X$ vector after boosting), then $\Phi$ is a harmonic map
\[
\Phi: Q \to \M
\]
and is therefore smooth (see \cite{MR1913803}).

If $X = t \partial_t + x \partial_x$
and $3Q$ is contained within the cone $\{ t > |x|\}$
then $\Phi$ can be interpreted as a portion of a self-similar Wave-Map, and
therefore it is  a harmonic map from a domain
\[
\Phi: \H^2\supseteq \Omega  \to \M
\]
and is again smooth. Note that since the Harmonic-Map equation is conformally invariant,
one could as well interpret this as a special case of the previous one. However, in the situation
where we have similar convergence on a large number of such domains $\Omega$ that fill up
$\H^2$,  $\Phi$ will be globally defined as an $\H^2$ harmonic map to $\M$, and we will
therefore be in a position to apply Theorem \ref{ss_thm}.\ret


\section{Proof of Theorems~\ref{maint},\ref{maints}}
\label{theproof}
We proceed in a series of steps:


\subsection{Extension and scaling in the blowup scenario}
We begin with Theorem~\ref{maint}. Let $\Phi$ be a wave map in $C_{(0,1]}$
with terminal energy
\[
 E = \lim_{t \to 0} \E_{S_t}[\Phi] \ .
\]
Suppose that the energy dispersion
scenario B) does not apply. Let $\epsilon > 0$ be so that B) does
not hold. We can choose $\epsilon$ arbitrarily small.
We will take advantage of this  to construct an
extension of $\Phi$ outside the cone which satisfies \eqref{caseben},
therefore violating \eqref{casebed} on any time interval $(0,t_0]$.

For this we use energy estimates. Setting
\[
\F_{t_0} [\Phi] =   \int_{\partial C_{(0,t_0]}}
 \big( \frac{1}{4}|L \Phi|^2 +  \frac{1}{2}|\spartial \Phi|^2\big) dA
\]
it follows that as $t_0 \to 0$ we have
\begin{equation}
 \F_{t_0} [\Phi] =\E_{S_{t_0}}[\Phi] -  E \to 0 \ .
\label{fluxdecay}\end{equation}
Then by pigeonholing we can choose $t_0$ arbitrarily small
so that we have the bounds
\begin{equation}
 \F_{t_0} [\Phi] \ll \epsilon^8 E, \qquad \int_{\partial S_{t_0}}
 |\spartial \Phi|^2 ds \ll \frac{\epsilon^8}{t_0} E \ .
\label{pige}\end{equation}
The second bound allows us to extend the initial data for $\Phi$
at time $t_0$ from $S_{t_0}$ to all of $\R^2$ in such a way that
\[
\E[\Phi](t_0) -  \E_{S_{t_0}}[\Phi] \ll \epsilon^8 E \ .
\]
We remark that by scaling it suffices to consider the case $t_0 = 1$.
The second bound in \eqref{pige} shows, by integration, that the
range of $\Phi$ restricted to $\partial S_{t_0}$ is contained
in a small ball of size $\epsilon^8$ in $\M$. Thus the extension
problem is purely local in $\M$, and can be carried out in a suitable
local chart by a variety of methods.

We extend the solution $\Phi$ outside the cone $C$ between
times $t_0$ and $0$ by solving the wave-map equation.
By energy estimates it follows that for $t \in (0,t_0]$ we have
\begin{equation}
\E[\Phi](t) -  \E_{S_{t}}[\Phi] = \E[\Phi](t_0) - \E_{S_{t_0}}[\Phi]
+ \F_{t_0}[\Phi] - \F_{t}[\Phi] \leq \frac{1}{2}\epsilon^8 E \ .
\label{noeout}
\end{equation}
Hence the energy stays small outside
the cone, and by the small data result in Theorem~\ref{tsmall} there
is no blow-up can occur outside the cone up to time $0$.  The
extension we have constructed is fixed for the rest of the proof.

Since our extension satisfies \eqref{caseben} but B) does not hold,
it follows that we can find a sequence $(t_n,x_n)$ with $t_n \to 0$  and $k_n \in \Z$ so that
\begin{equation}
 | P_{k_n} \Phi(t_n,x_n)| + 2^{-k}
|P_{k_n} \partial_t \Phi(t_n,x_n)| \geq \epsilon \ .
\label{pconc}\end{equation}
The relation \eqref{fluxdecay} shows that $\F_{t_n} \to 0$.
Hence we can find a sequence $\epsilon_n \to 0$
such that $ \F_{t_n} < \epsilon_n^\frac12 E$. Thus
\begin{equation}
\F_{[\epsilon_n t_n,t_n]} < \epsilon_n^\frac12 E \ .
\label{fluxtn}\end{equation}
Rescaling we obtain the sequence of wave maps
\[
 \Phi^{(n)}(t,x) = \Phi( t_n  t, t_n  x)
\]
in the increasing family of regions $ \R^2 \times [\epsilon_n,1]$
with the following properties:\ret

a) Uniform energy size,
\begin{equation}
 \E[\Phi^{(n)}] \approx E \ .
\label{entnresc} \end{equation}

b) Small energy outside the cone,
\begin{equation}
\E[\Phi^{(n)}] - \E_{S_t}[\Phi^{(n)}] \lesssim \epsilon^8 E \ .
\label{outentnresc} \end{equation}

c) Decaying flux,
\begin{equation}
\F_{[\epsilon_n,1]}[\Phi^{(n)}] < \epsilon_n^\frac12 E \ .
\label{fluxtnresc}\end{equation}

d) Pointwise concentration at time $1$,
\begin{equation}
 | P_{k_n} \Phi^{(n)}(1,x_n)| + 2^{-k_n} |P_{k_n} \partial_t\Phi^{(n)} (1,x_n)| > \epsilon \ .
\label{pointnresc}\end{equation}
for some $x_n \in \R^2$, $k_n \in \Z$.\ret


\subsection{Extension and scaling in the non-scattering scenario}

Next we consider the case of Theorem~\ref{maints}.  Again we suppose
that the energy dispersion scenario B) does not apply for a finite energy wave map
$\Phi$ in $C_{[1,\infty)}$. Let $\epsilon > 0$ be so that B) does
not hold. Setting
\[
E = \lim_{t_0 \to \infty} \E_{S_{t_0}}[\Phi] \ , \qquad
\F_{t_0} [\Phi] =   \int_{\partial C_{[t_0,\infty)}}
 \big( \frac{1}{4}|L \Phi|^2 +  \frac{1}{2}|\spartial \Phi|^2\big) dA
\]
it follows that
\begin{equation}
 \F_{t_0} [\Phi] =  \E_{S_\infty}[\Phi] -  \E_{S_{t_0}}[\Phi]  \to 0 \ .
\label{fluxdecaya}\end{equation}
We choose $t_0 > 1$ so that
\[
\F_{t_0}[\Phi]  \leq \epsilon^8 E \ .
\]
We obtain our extension of $\Phi$ to the interval $[t_0,\infty)$
from the following lemma:

\begin{lem}
Let $\Phi$ be a finite energy wave-map in $C_{[1,\infty)}$ and
$E$, $t_0$ as above. Then there
exists a wave-map extension of $\Phi$ to $\R^2 \times [t_0,\infty)$ which
has energy $E$.
\end{lem}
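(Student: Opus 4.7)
The plan is to mirror the backward construction already carried out for Theorem~\ref{maint}, but evolving forward instead of backward in time. Specifically, I would first replace $t_0$ by a nearby larger time (still denoted $t_0$) for which, in addition to $\F_{t_0}[\Phi] \leq \epsilon^8 E$, one also has the pointwise-in-time boundary bound
\begin{equation}
 \int_{\partial S_{t_0}} |\spartial \Phi|^2 \, ds \ll \frac{\epsilon^8}{t_0} E . \notag
\end{equation}
This is obtained by pigeonholing: since $\F_{t_0}[\Phi]$ is an integral over the lateral boundary $\partial C_{[t_0,\infty)}$ of a density containing $|\spartial \Phi|^2$, and since the total flux tends to $0$, there must be many times at which the time-slice of this density is as small as advertised.

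Second, I would extend the data $\Phi[t_0]$ from $S_{t_0}$ to all of $\R^2$. By the Poincar\'e inequality on the circle $\partial S_{t_0}$ (of length $2\pi t_0$), the displayed angular bound forces the trace of $\Phi$ on $\partial S_{t_0}$ to lie in an $O(\epsilon^4)$--neighborhood of a single point of $\M$. Thus the extension problem is local on the target and can be performed in a single coordinate chart: I would multiply $\Phi[t_0]$ by a smooth radial cutoff that equals $1$ on $S_{t_0}$ and is supported in a slightly larger disk, mollifying as necessary to preserve $C^\infty$ regularity. A standard trace/extension estimate together with the two smallness properties above produces an extension $(\widetilde{\Phi}_0, \widetilde{\Phi}_1) \in \dot H^1\times L^2(\R^2)$ with
\begin{equation}
 \E[(\widetilde{\Phi}_0,\widetilde{\Phi}_1)] - \E_{S_{t_0}}[\Phi] \ll \epsilon^8 E . \notag
\end{equation}

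Third, I would solve the wave-map Cauchy problem \eqref{main_eq} forward in time from this data. By the finite speed of propagation plus uniqueness, the resulting solution must coincide with $\Phi$ on $C_{[t_0,\infty)}$, so one obtains a genuine extension. Outside the cone the solution has energy bounded by the exterior energy at $t_0$ plus the outgoing flux through $\partial C_{[t_0,\infty)}$, both of which are $O(\epsilon^8 E)$ and in particular below the small-data threshold $E_0$ of Theorem~\ref{tsmall}; hence the exterior evolution extends globally forward in time and remains in $S$. Energy conservation for the full extension then gives
\begin{equation}
 \E[\Phi](t) = \E_{S_{t_0}}[\Phi] + \bigl(\E[\Phi](t_0) - \E_{S_{t_0}}[\Phi]\bigr) = E - \F_{t_0}[\Phi] + O(\epsilon^8 E) = E + O(\epsilon^8 E), \notag
\end{equation}
which is the asserted energy bound (in the same approximate sense as \eqref{casebens}).

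The only nontrivial step is the middle one: constructing the spatial extension of the data with $\ll \epsilon^8 E$ energy cost. The obstacle is that a priori $\Phi|_{\partial S_{t_0}}$ is only $H^{1/2}$ on the circle and $\partial_t\Phi|_{\partial S_{t_0}}$ is only a trace of an $L^2$ function, so one cannot naively extend by $0$. The pigeonhole-selected time $t_0$ is exactly what rescues this: the boundary trace is then concentrated near a single point of $\M$ with quantitatively small angular oscillation, so the extension can be executed in one coordinate patch with a standard cutoff-and-mollify construction whose energy cost is controlled directly by the small boundary quantities.
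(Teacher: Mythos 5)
Your construction is essentially correct as a construction, but it proves a weaker statement than the lemma asserts, and the paper's proof takes a genuinely different route precisely in order to get the stronger conclusion. The paper does \emph{not} extend the data at $t_0$ and solve forward. It picks a sequence $t_k \to \infty$ with $\F_{t_k}[\Phi] \to 0$ and $t_k \int_{\partial S_{t_k}} |\spartial \Phi|^2\, dA \to 0$, extends the data at time $t_k$ (by the same cutoff-in-a-chart argument you describe, now with energy cost $o_k(1)$ rather than $O(\epsilon^8 E)$), solves \emph{backward} from $t_k$ to $t_0$ to obtain wave maps $\Phi^{(k)}$ on $[t_0,t_k]$ that agree with $\Phi$ inside the cone and have small exterior energy, and then extracts the extension as a strong limit of the $\Phi^{(k)}$ in the energy norm: a weak limit exists by Banach--Alaoglu, it is a regular finite-energy wave map by the small-data theory and the weak stability bound \eqref{weaklip}, and weak convergence is upgraded to strong convergence because $\E[\Phi^{(k)}] \to E$ while the limit must have energy at least $\lim_{t\to\infty}\E_{S_t}[\Phi] = E$. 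The payoff of this extra work is that the limit has energy \emph{exactly} $E$; equivalently, all of its energy moves inside the cone as $t \to \infty$, which is the point stressed in the remark immediately following the lemma.

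Your forward construction cannot deliver this. If you extend at time $t_0$ with energy cost $c \ll \epsilon^8 E$ and solve forward, conservation of energy gives $\E[\Phi] = \E_{S_{t_0}}[\Phi] + c = E - \F_{t_0}[\Phi] + c$, and the exterior energy at time $t$ equals $c - \F_{[t_0,t]}[\Phi] \to c - \F_{t_0}[\Phi]$, which is generically a nonzero limit. So the extension's total energy is not $E$, its exterior energy does not vanish asymptotically, and the identity $\E[\Phi] - \E_{S_t}[\Phi] = \F_t[\Phi]$ invoked in the line right after the lemma fails for your extension. That said, the inequality $\E[\Phi] - \E_{S_t}[\Phi] \lesssim \epsilon^8 E$ and the bound \eqref{casebens}, which are what the subsequent argument actually uses, do survive; so your weaker extension would suffice for the application in the proof of Theorem~\ref{maints}, but it is a genuine gap relative to the lemma as stated. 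Two smaller points: your pigeonholing for $\int_{\partial S_{t_0}} |\spartial\Phi|^2\, ds \ll \epsilon^8 E / t_0$ requires first moving $t_0$ out far enough that $\F_{t_0}[\Phi] \ll \epsilon^8 E$ rather than merely $\leq \epsilon^8 E$ (harmless, since the flux tends to zero); and the Poincar\'e-type step should be phrased as controlling the oscillation of the trace by $\big(2\pi t_0 \int_{\partial S_{t_0}} |\spartial\Phi|^2\, ds\big)^{1/2} \lesssim \epsilon^4 E^{1/2}$, exactly as in the extension step for Theorem~\ref{maint}.
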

We remark that  as $t \to \infty$ all the energy of the extension
moves inside the cone. It is likely that an extension with this property is unique. We do not pursue this here, as it is not needed.

\begin{proof}
By pigeonholing we can choose a sequence $t_k \to \infty$
so that we have the bounds
\[
 \F_{t_k} [\Phi] \to 0 \ , \qquad t_k \int_{\partial S_{t_k}}
 |\spartial \Phi|^2 dA \to 0 \ .
\]
The second bound allows us to obtain an extension $\Phi^{(k)}[t_k]$ of the
initial data $\Phi[t_k]$ for $\Phi$ at time $t_k$ from the circle $S_{t_k}$ to all of $\R^2$ in such a way that
\[
\E[\Phi^{(k)}](t_k) -  \E_{S_{t_k}}[\Phi] \to 0 \ .
\]
By rescaling, this extension problem is equivalent to the one  in the case
of Theorem~\ref{maint}.

We solve the wave map equation backwards from time $t_k$ to time $t_0$,
with data $\Phi^{(k)}[t_k]$.  We obtain a wave map $\Phi^{(k)}$ in
the time interval $[t_0,t_k]$ which coincides with $\Phi$ in $C_{[t_0,t_k]}$.
The above relation shows that
\begin{equation}
\E[\Phi^{(k)}] \to E \ .
\label{limitenergy}\end{equation}
By energy estimates it also follows that for large $k$ and $t\in [t_0,t_k]$ we have
\begin{equation}
\E[\Phi^{(k)}](t) -  \E_{S_{t}}[\Phi^{(k)}] = \E[\Phi^{(k)}](t_k) -  \E_{S_{t_k}}[\Phi]
+ \F_{[t,t_k]}[\Phi] \lesssim \epsilon^8 E \ .
\label{noeouta}\end{equation}
Hence the energy stays small outside the cone, which by the small data
result in Theorem~\ref{tsmall} shows that no blow-up can occur outside
the cone between times $t_k$ and $t_0$.

We will obtain the extension of $\Phi$ as the
strong limit in the energy norm of a subsequence of the $\Phi^{(k)}$,
\begin{equation}
\Phi = \lim_{k \to \infty} \Phi^{(k)} \qquad \text{in}\  C(t_0,\infty; \dot{H^1})
\cap \dot C^1(t_0,\infty;L^2) \ .
\label{stronglimit}\end{equation}

We begin with the existence of a weak limit. By uniform boundedness
and the Banach-Alaoglu theorem we have weak convergence for each
fixed time
\[
\Phi^{(k)}[t] \rightharpoonup \Phi[t] \ \ \text{ weakly in } \ \dot
H^1 \times L^2 \ .
\]

Within $C_{[t_0,\infty)}$ all the $\Phi^{(k)}$'s coincide, so the
above convergence is only relevant outside the cone. But by
\eqref{noeouta}, outside the cone all $\Phi^{(k)}$ have small
energy. This places us in the context of the results in
\cite{Tataru_WM1}. Precisely, the weak stability bound
\eqref{weaklip} in Theorem~\ref{tsmall}
and an argument similar to that used in Section \ref{X_sect}
shows that the limit $\Phi$ is a regular finite energy wave-map in $[t_0,\infty)
\times \R^2$.

It remains to upgrade the convergence. On one hand,
\eqref{limitenergy} and weak convergence shows that $\E[\Phi] \leq
\E_{S_{\infty}}[\Phi]$. On the other hand $\E[\Phi] \geq
\E_{S_t}[\Phi]$, and the latter converges to
$\E_{S_{\infty}}[\Phi]$. Thus we obtain
\[
 \E[\Phi] = \E_{S_{\infty}}[\Phi] = \lim_{k\to \infty} \E[\Phi^{(k)}] \ .
\]
From weak convergence and norm convergence we obtain strong convergence
\[
 \Phi^{(k)}[t] \to \Phi[t] \ \ \text{ in } \dot H^1 \times L^2 \ .
\]
The uniform convergence in \eqref{stronglimit} follows by applying
the energy continuity in the small data result of
Theorem~\ref{tsmall} outside $C_{[t_0,\infty)}$.
\end{proof}

By energy estimates for $\Phi$ it follows that
\[
 \E[\Phi] - \E_{S_t}[\Phi] = \F_{t}[\Phi] \leq
 F_{t_0}[\Phi] \leq \epsilon^8 E \ , \qquad t \in [t_0,\infty) \ .
\]
Hence the extended $\Phi$ satisfies \eqref{casebens} so
it cannot satisfy \eqref{casebeds}. By \eqref{fluxdecay}
we obtain the sequence $(t_n,x_n)$ and $k_n \in \Z$ with
$t_n \to \infty$ so that \eqref{pconc} holds.
On the other hand from the energy-flux relation we have
\[
 \lim_{t_1,t_2 \to \infty} \F_{[t_1,t_2]}[\Phi] = 0 \ .
\]
This allows us to select again $\epsilon_n \to 0$ so that
\eqref{fluxtn} holds; clearly in this case we must also have $\epsilon_n t_n \to \infty$.
The same rescaling as in the previous subsection
leads to a sequence $\Phi^{(n)}$ of wave maps which satisfy
the conditions a)-d) above.\ret


\subsection{Elimination of the null concentration scenario}
Due to \eqref{outentnresc}, the contribution of the exterior of the cone to the
pointwise bounds for $\Phi^{(n)}$ is negligible. Precisely, for low frequencies
we have the pointwise bound
\[
 | P_k \Phi^{(n)}(1,x)| + 2^{-k} | P_k \partial_t \Phi^{(n)}(1,x)|
\lesssim   \big(2^k  (1+ 2^k |x|)^{-N}+\epsilon^4\big) E^\frac{1}{2}
, \qquad k \leq 0
\]
where the first term contains the contribution from the interior of the
cone and the second is the outside contribution. On the other hand,
for large frequencies we similarly obtain
\begin{equation}
 | P_k \Phi^{(n)}(1,x)| + 2^k | P_k \partial_t \Phi^{(n)}(1,x)|
\lesssim    \big((1+ 2^k (|x|-1)_{+})^{-N} + \epsilon^4\big)
E^\frac{1}{2}\ , \qquad k \geq 0  \ . \label{largek-point-a}
\end{equation}
Hence, in order for \eqref{pointnresc} to hold, $k_n$ must be large
enough,
\[
 2^{k_n} > m(\epsilon,E)
\]
while $x_n$ cannot be too far outside the cone,
\[
 |x_n| \leq 1 + 2^{-k_n} g(\epsilon,E) \ .
\]
This allows us to  distinguish three cases:

\begin{enumerate}[(i)]
\item {\bf Wide pockets of energy.} This is when
\[
2^{k_n} < C < \infty \ .
\]

\item {\bf Sharp time-like pockets of energy.} This is when
\[
2^{k_n}  \to \infty, \qquad |x_n| \leq \gamma < 1 \ .
\]

\item {\bf Sharp pockets of null  energy.}
 This is when
\[
2^{k_n}  \to \infty, \qquad |x_n| \to 1 \ .
\]
\end{enumerate}\ret

Our goal in this subsection is to eliminate the last case.
Precisely, we will show that:

\begin{lem}
 There exists $M = M(\epsilon,E) > 0$ and
$\gamma=\gamma(\epsilon,E) < 1$ so that for any wave map $\Phi^{(n)}$
as in \eqref{entnresc}-\eqref{pointnresc} with a small enough $\epsilon_n$ we have
\begin{equation}
2^{k_n}  > M \Longrightarrow |x_n| \leq \gamma \ .
\label{nonull}\end{equation}
\end{lem}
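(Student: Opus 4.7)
The plan is to argue by contradiction. I suppose that for no choice of $M,\gamma$ the implication \eqref{nonull} holds, so, after passing to a subsequence of $n$, I may assume $2^{k_n}\to\infty$ while $|x_n|\to 1$.

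\textbf{Step 1 (Localize the concentration in physical space).}
First I use Bernstein's inequality for the Littlewood--Paley projector $P_{k_n}$ together with the small exterior energy bound \eqref{outentnresc} to convert the pointwise information \eqref{pointnresc} into a local $L^2$ lower bound on the energy density. That is, writing $B_n = B(x_n,R\,2^{-k_n})$, I obtain $R=R(\epsilon,E)$ such that
\begin{equation*}
    \int_{B_n\cap S_1}\big(|\partial_t\Phi^{(n)}|^2+|\nabla_x\Phi^{(n)}|^2\big)(1,y)\,dy \ \gtrsim \ \epsilon^2.
\end{equation*}
The contribution from the exterior of the cone, which is $\lesssim \epsilon^8 E$, is absorbed into the error provided $\epsilon$ is small in terms of $E$ (the regime where the lemma is nontrivial, since otherwise \eqref{casebed} is automatic).

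\textbf{Step 2 (Reduce to $\bL$ concentration via the weighted estimate).}
The flux bound \eqref{fluxtnresc} lets me apply \eqref{enxea} with $\epsilon=\epsilon_n$ to $\Phi^{(n)}$. Expanding ${}^{(X_{\epsilon_n})}\!P_0$ in the null frame gives
\begin{equation*}
    \int_{S_1^0}\!\left[\Big(\tfrac{v_{\epsilon_n}}{u_{\epsilon_n}}\Big)^{\!1/2}\!\big(|L\Phi^{(n)}|^2+|\spartial\Phi^{(n)}|^2\big)+\Big(\tfrac{u_{\epsilon_n}}{v_{\epsilon_n}}\Big)^{\!1/2}\!|\bL\Phi^{(n)}|^2\right]\!dy \ \lesssim \ E.
\end{equation*}
Throughout $B_n$ one has $u_{\epsilon_n}\lesssim \epsilon_n + (1-|x_n|) + R\,2^{-k_n}\to 0$, so the weight $(v_{\epsilon_n}/u_{\epsilon_n})^{1/2}$ diverges; this forces
\begin{equation*}
    \int_{B_n}\big(|L\Phi^{(n)}|^2+|\spartial\Phi^{(n)}|^2\big)\,dy \ = \ o(1).
\end{equation*}
Consequently the entire $\gtrsim \epsilon^2$ of localized energy must be carried by the ingoing null component:
\begin{equation*}
    \int_{B_n} |\bL\Phi^{(n)}|^2\,dy \ \gtrsim \ \epsilon^2.
\end{equation*}

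\textbf{Step 3 (The hard part: killing the $\bL$ concentration).}
This is the main obstacle, because the weight on $|\bL\Phi|^2$ in \eqref{enxea} degenerates precisely in the regime of interest. My plan is to use finite speed of propagation combined with the exterior-energy smallness. The backward $\bL$ characteristic from $(1,x_n)$ exits the cone $C$ at time $t_\star=(1+|x_n|)/2$, and just beyond that time the backward domain of dependence $B(x_n,R\,2^{-k_n}+(1-\widetilde t))$ from $B_n\times\{1\}$ lies essentially outside $C_{\widetilde t}$, the only overlap being a thin sliver of thickness $O(R\,2^{-k_n})$ near the lateral boundary $\partial C_{\widetilde t}$. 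For a suitable choice of $\widetilde t$ slightly below $t_\star$, finite propagation gives
\begin{equation*}
    \int_{B_n}|\bL\Phi^{(n)}|^2\,dy \ \lesssim \ \epsilon^8 E \ + \ (\text{sliver}),
\end{equation*}
where the first term comes from \eqref{outentnresc} and the sliver contribution is treated by reapplying \eqref{enxea} at time $\widetilde t$: on the sliver the weight $(v_{\epsilon_n}/u_{\epsilon_n})^{1/2}$ is large of order $2^{k_n/2}$, which makes the sliver contribution from $L\Phi^{(n)}$, $\spartial\Phi^{(n)}$ vanish in the limit, while its $\bL\Phi^{(n)}$ contribution is negligible by a pigeonholing in $\widetilde t$ combined with the spacetime bound \eqref{xphi}. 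The resulting estimate $\epsilon^2\lesssim \epsilon^8 E + o(1)$ contradicts the regime $\epsilon^8 E\ll\epsilon^2$, proving the lemma. The delicate geometric matching of the backward cone to the light cone $\partial C$, and the control of the thin overlap sliver, are the technical heart of the argument.
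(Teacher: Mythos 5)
Your Steps 1 and 2 are fine: the Bernstein localization is legitimate (after subtracting the local mean of $\Phi$), and the observation that \eqref{enxea} forces $\int_{B_n}(|L\Phi^{(n)}|^2+|\spartial\Phi^{(n)}|^2)=o(1)$ because the weight $(v_{\epsilon_n}/u_{\epsilon_n})^{1/2}$ blows up on $B_n$ is exactly the right use of the weighted energy estimate. The gap is in Step 3, and I do not believe it can be closed along the lines you sketch. The quantity you need to kill, $\int_{\mathrm{sliver}}|\bL\Phi^{(n)}(\tilde t\,)|^2$, is precisely the one component of the energy that \emph{none} of the available estimates control near the lateral boundary: the flux controls only $|L\Phi|^2+|\spartial\Phi|^2$ there, and in $\rho^{-1}|X_0\Phi|^2$ the coefficient of $|\bL\Phi|^2$ is $u^{1/2}v^{-3/2}$, which is \emph{small} (not large) in the sliver, so \eqref{xphi} gives a bound on $\int|\bL\Phi|^2$ that degenerates like $u^{-1/2}$ rather than improves. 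This is not a technicality: a purely radial outgoing wave with all its energy $E$ in the annulus $\{\tilde t-\sigma\le r\le \tilde t\}$ has negligible flux and vanishing exterior energy yet keeps $O(E)$ of $|\bL\Phi|^2$-energy glued to the lateral boundary for all time, so no pigeonholing in $\tilde t$ can make the sliver contribution small. In other words, after Step 1 you have discarded the directional Fourier information, and the surviving statement ``$\gtrsim\epsilon^2$ of $\bL$-energy in a ball of radius $R2^{-k_n}$ near the light cone'' cannot be refuted by finite speed of propagation alone, because finite speed sees only the radial geometry while the actual obstruction is angular: energy can sit in a thin annulus along the boundary, and what is impossible is only its concentration into a single small \emph{ball} at a single high frequency.

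The mechanism the paper uses (and which your argument is missing) is an anisotropic decomposition of the multiplier itself: after rotating so $x_n=(x_1,0)$, write $P_k=P^1_{k,\theta}\partial_1+P^2_{k,\theta}\partial_2$ with $P^1_{k,\theta}$ supported in the thin sector $\{|\xi_2|\lesssim 2^k\theta\}$ and $P^2_{k,\theta}$ in its complement. The sector piece is estimated by Bernstein on the sector, $\|P^1_{k,\theta}\|_{L^2\to L^\infty}\lesssim\theta^{1/2}$, using only the energy bound on $\partial_1\Phi^{(n)}$; the complementary piece hits the angular derivative $\partial_2\Phi^{(n)}\approx\spartial\Phi^{(n)}$, which carries the weight $((1-|x|)_++2^{-k}+\epsilon_n)^{-1/2}$ from \eqref{nea}, and costs $\theta^{-1/2}$. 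Optimizing in $\theta\in[2^{-k/2},1]$ yields the pointwise bound
\begin{equation*}
|P_k\Phi^{(n)}(1,x)|+2^{-k}|P_k\partial_t\Phi^{(n)}(1,x)|\lesssim\Big(\big((1-|x|)_++2^{-k}+\epsilon_n\big)^{1/8}+\epsilon^2\Big)E^{1/2}\ ,
\end{equation*}
which contradicts \eqref{pointnresc} directly once $2^{k_n}$ is large and $|x_n|$ is close to $1$, with quantitative constants $M(\epsilon,E)$, $\gamma(\epsilon,E)$. Note that this argument works at the level of the pointwise quantity $|P_k\Phi|$ and never passes through a ball-averaged energy lower bound; that is what allows it to exploit the good (angular and outgoing) derivatives while paying only a $\theta^{1/2}$ Bernstein loss for the uncontrolled radial/ingoing one.
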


\begin{proof}
We apply the energy estimate \eqref{enxea}, with $\epsilon$ replaced
by $\epsilon_n$, to $\Phi^{(n)}$ in the time interval
$[\epsilon_n,1]$. This yields
\begin{equation}\label{nea}
 \int_{S_{1}}
\left( 1-|x|+\epsilon_n\right)^{-\frac12} \big(|L\Phi^{(n)}|^2 +
|\spartial \Phi^{(n)}|^2\big) dx \lesssim E \ .
\end{equation}
The relation \eqref{nonull} would follow from
the pointwise concentration bound in \eqref{pointnresc} if we can prove that for $k > 0$ the bound \eqref{nea}, together with \eqref{entnresc} and
\eqref{outentnresc} at time $t=1$, imply the pointwise estimate
\begin{equation}
 | P_k \Phi^{(n)}(1,x)| + 2^{-k} | P_k \partial_t \Phi^{(n)}(1,x)|
\lesssim  \Big( \big( (1-|x|)_+  + 2^{-k}  + \epsilon_n
\big)^\frac18 +\epsilon^2\Big) E^\frac12 \ .
\label{largek-point-b}
\end{equation}
In view of \eqref{largek-point-a} it suffices to prove this bound in
the case where $\frac12 < |x| < 2$. After a rotation we can assume
that $x=(x_1,0)$ with $\frac12 < x_1 < 2$.

In general we have
\[
 (1-|x|)_{+} \leq (1-x_1)_{+} +x_2^2 \ ,
\]
and therefore from \eqref{nea} we obtain
\begin{equation}
\int_{S_{1}} \frac{1}{ (|1-x_1|+x_2^2 + \epsilon_n)^\frac12}
\big(|L\Phi^{(n)}|^2 + |\spartial \Phi^{(n)}|^2\big) dx \lesssim E \
. \label{ned}
\end{equation}
At  the spatial location $(1,0)$ we have $\spartial \Phi^{(n)} =
\partial_2$, so  we obtain the rough bound
\[
|\partial_2 \Phi^{(n)}| \lesssim |\spartial \Phi^{(n)}|+
(|x_2|+|1-x_1|) |\nabla_x \Phi^{(n)}| \ .
\]
Similarly,
\[
|\partial_t  \Phi^{(n)}-\partial_1 \Phi^{(n)}| \lesssim |L
\Phi^{(n)}|+ (|x_2|+|1-x_1|) |\nabla_x \Phi^{(n)}| \ .
\]
Hence, taking into account \eqref{entnresc} and \eqref{outentnresc}, from \eqref{ned} we obtain
\begin{equation}
\int_{t=1} \frac{1}{ ((1-x_1)_++x_2^2 + \epsilon_n)^\frac12+
\epsilon^8} \big(|\partial_t \Phi^{(n)} -\partial_1 \Phi^{(n)}|^2 +
|\partial_2 \Phi^{(n)}|^2\big) dx \lesssim E \ . \label{neb}
\end{equation}

Next, given a dyadic frequency $2^k \geq 1$ we consider an angular
parameter $2^{-\frac{k}2} \leq \theta \leq 1$ and we rewrite the
multiplier $P_k$ in the form
\[
P_k = P_{k,\theta}^1 \partial_1 + P_{k,\theta}^2 \partial_2
\]
where $ P_{k,\theta}^1$ and $ P_{k,\theta}^2$ are multipliers with
smooth symbols supported in the sets $\{ |\xi| \approx 2^k, \
|\xi_2| \lesssim 2^k \theta\}$, respectively $\{ |\xi| \approx 2^k,
\ |\xi_2| \gtrsim 2^k \theta\}$. The size of their symbols is given
by
\[
|p_{k,\theta}^1(\xi)| \lesssim 2^{-k}, \qquad |p_{k,\theta}^2(\xi)|
\lesssim \frac{1}{|\xi_2|} \ .
\]
Therefore, they satisfy the $L^2 \to L^{\infty}$ bounds
\begin{equation}
\|  P_{k,\theta}^1\|_{L^2 \to L^\infty} \lesssim \theta^{\frac12} \
, \qquad \|  P_{k,\theta}^2\|_{L^2 \to L^\infty} \lesssim
\theta^{-\frac12}  \ . \label{lipkt}
\end{equation}
In addition, the kernels of both $ P_{k,\theta}^1$ and $
P_{k,\theta}^2$ decay rapidly on the $2^{-k} \times \theta^{-1}
2^{-k}$ scale. Thus one can add weights in \eqref{lipkt} provided
that they are slowly varying on the same scale. The weight in
\eqref{ned} is not necessarily slowly varying, but we can remedy
this by slightly increasing the denominator to obtain the  weaker
bound
\begin{equation}
\int_{t=1}\!\! \frac{1}{ ((1-x_1)_+ + x_2^2  +
2^{-k}+\epsilon_n)^\frac12+ \epsilon^8} \big(|\partial_t
\Phi^{(n)}\!\!-\partial_1 \Phi^{(n)}|^2 + |\partial_2
\Phi^{(n)}|^2\big) dx\! \lesssim\! E \ . \label{nee}
\end{equation}

Using \eqref{entnresc}, \eqref{nee} and the weighted version of \eqref{lipkt}
we obtain
\[
\begin{split}
|P_k \Phi^{(n)}(1,x)| \leq &\ | P_{k,\theta}^1 \partial_1
\Phi^{(n)}(1,x)|+ | P_{k,\theta}^2 \partial_2 \Phi^{(n)}(1,x)|
\\ \lesssim &\ \Big(\theta^\frac12  + \theta^{-\frac12}\big( ((1-x_1)_+ + x_2^2
+ 2^{-k}+\epsilon_n)^\frac14 + \epsilon^4\big) \Big) E^\frac12 \ .
\end{split}
\]
We set  $x_2= 0$ and  optimize with respect to $\theta \in [2^{-k/2},1]$
to obtain the desired bound \eqref{largek-point-b} for $\Phi^{(n)}$,
\[
|P_k \Phi^{(n)}(1,x_1,0)| \lesssim \left(\left((1-x_1)_+ + 2^{-k} +
\epsilon_n\right)^\frac18 + \epsilon^2\right) E^\frac12 \ , \qquad
\frac12 < x_1 < 2 \ .
\]
A similar argument yields
\[
2^{-k} |P_k \partial_1 \Phi^{(n)}(1,x_1,0) | \lesssim
\left(\left((1-x_1)_+ + 2^{-k} + \epsilon_n\right)^\frac18 +
\epsilon^2\right) E^\frac12 \ , \qquad \frac12 < x_1 < 2 \ .
\]
On the other hand, from \eqref{nee} we directly
obtain
\[
2^{-k} |P_k (\partial_t -\partial_1) \Phi^{(n)}(1,x_1,0)| \lesssim
\left(\left((1-x_1)_+ + 2^{-k} + \epsilon_n\right)^\frac18 +
\epsilon^2\right) E^\frac12 \ , \ \ \frac12 < x_1 < 2 \ .
\]
Combined with the previous inequality, this yields the bound in \eqref{largek-point-b} for $\partial_t \Phi^{(n)}$.
\end{proof}\ret


\subsection{Nontrivial energy in a time-like cone}

According to the previous step, the points $x_n$ and frequencies $k_n$
in \eqref{pointnresc}  satisfy  one of the following two conditions:

\begin{enumerate}[(i)]
\item {\bf Wide pockets of energy.} This is when
\[
c(\epsilon,E) < 2^{k_n}  < C(\epsilon,E) \ .
\]

\item {\bf Sharp time-like pockets of energy.} This is when
\[
2^{k_n}  > C(\epsilon,E) \ , \qquad |x_n| \leq \gamma(\epsilon,E) <
1 \ .
\]
\end{enumerate}\ret

Using only the bounds \eqref{entnresc} and \eqref{outentnresc}, we
will prove that there exists $\gamma_1=\gamma_1(\epsilon,E) < 1$ and
$E_1=E_1(\epsilon,E) > 0$ so that in both cases there is some amount
of uniform time-like energy concentration,
\begin{equation}
\frac12\int_{t=1, |x| < \gamma_1}  \big( |\partial_t \Phi^{(n)}|^2 +
|\nabla_x \Phi^{(n)}|^2\big) dx \geq E_1 \ . \label{energyinsidea}
\end{equation}
For convenience we drop the index $n$ in the following computations.
Denote
\[
E(\gamma_1) = \frac12\int_{t=1, |x| < \gamma_1}  \big( |\partial_t
\Phi|^2 + |\nabla_x \Phi|^2\big) dx
\]
where $\gamma_1\in (\frac{\gamma+1}2 ,1)$ will  be chosen later.
Here $\gamma$ is from line \eqref{nonull} of the previous
subsection. Thus at time $1$ the function $\Phi$ has energy
$E(\gamma_1)$ in $\{ |x| < \gamma_1\}$, energy $\leq E$ in
$\{\gamma_1 < |x| <1\}$, and energy $\leq \epsilon^8 E$ outside the
unit disc. Then we obtain different pointwise estimates for $P_k
\Phi[1]$ in two main regimes:\ret

{\bf (a) $2^k (1-\gamma_1) < 1$.}
Then we obtain
\[
\|P_k \Phi(1)\|_{L^\infty_x} + 2^{-k} \|P_k \partial_t
\Phi(1)\|_{L^\infty_x} \lesssim  E(\gamma_1)^\frac12 + \left(
(2^k(1-\gamma_1))^\frac12  +\epsilon^4\right)  E^\frac12 \ ,
\]
with a further improvement if both

{\bf (a1)} $2^k (1-\gamma_1) < 1 < 2^k (1-\gamma)$ and $|x| < \gamma$,
namely
\[
|P_k \Phi(1,x)| + 2^{-k} |P_k \partial_t \Phi(1,x)| \lesssim
E(\gamma_1)^\frac12 + \left((2^k(1-\gamma_1))^\frac12
(2^k(1-\gamma))^{-N} +\epsilon^4\right)  E^\frac12 \ .
\]\ret

{\bf (b) $2^k (1-\gamma_1) \geq 1$.}
Then
\[
|P_k \Phi(1,x)| + 2^{-k} |P_k \partial_t \Phi(1,x)| \lesssim
E(\gamma_1)^\frac12 + \left( (2^k (1-\gamma))^{-N}
+\epsilon^4\right) E^\frac12, \qquad |x| < \gamma \ .
\]\ret

We use these estimates to bound $E(\gamma_1)$ from below. We first
observe that if $1-\gamma_1$ is small enough then case (i) above
implies we are in regime {\bf (a)}, and from \eqref{pointnresc} we
obtain
\[
\epsilon \lesssim  E(\gamma_1)^\frac12 + \Big(\big(C(\epsilon,E)
(1-\gamma_1)\big)^\frac12  +\epsilon^4\Big)  E^\frac12 \ ,
\]
which gives a bound from below for $E(\gamma_1)$ if $1-\gamma_1$
and $\epsilon$ are small enough.

Consider now the remaining case (ii) above. If we are in regime {\bf
(a)} but not {\bf (a1)}, then the bound in {\bf (a)} combined with
\eqref{pointnresc} gives
\[
\epsilon \lesssim E(\gamma_1)^\frac12 +
\Big(\frac{(1-\gamma_1)^\frac12}{(1-\gamma)^\frac12} + \epsilon^4
\Big)E^\frac12 \ ,
\]
which suffices if $1-\gamma_1$ is small enough. If we are in regime
{\bf (a1)} then we obtain exactly the same inequality directly.
Finally, if we are in regime {\bf (b)} then we achieve an even
better bound
\[
\epsilon \lesssim E(\gamma_1)^\frac12 +
\Big(\Big(\frac{1-\gamma_1}{1-\gamma} \Big)^N  + \epsilon^4
\Big)E^\frac12 \ .
\]
Thus, \eqref{energyinsidea} is proved in all cases for a small
enough $1-\gamma_1$.\ret


\subsection{Propagation of time-like energy concentration}

Here we use the flux relation \eqref{fluxtn} to propagate the
time-like energy concentration in \eqref{energyinsidea}
\emph{uniformly} to smaller times $t \in
[\epsilon_n^\frac12,\epsilon_n^\frac14]$. Precisely, we show that
there exists $\gamma_2 = \gamma_2(\epsilon,E) < 1$ and $E_2 =
E_2(\epsilon,E) > 0$ so that
\begin{equation}
  \frac12\int_{ |x| < \gamma_2 t}
   \big( |\partial_t\Phi^{(n)} |^2 + |\nabla_x \Phi^{(n)}|^2\big)dx \geq E_2 \ ,
\qquad t \in [\epsilon_n^\frac12,\epsilon_n^\frac14] \ .
\label{enprop}
\end{equation}
At the same time, we also obtain uniform weighted $L^2_{t,x}$ bounds
for $X_0 \Phi^{(n)}$ outside smaller and smaller neighborhoods of
the cone, namely
\begin{equation}
\int_{C_{[ \epsilon_n^\frac12,\epsilon_n^\frac14 ]}^{\epsilon_n}}
\rho^{-1} |X_0 \Phi^{(n)}|^2 dx dt \lesssim E  \ . \label{xophina}
\end{equation}

The latter bound \eqref{xophina} is a direct consequence of
\eqref{xphi}, so we turn our attention to \eqref{enprop}. Given a
parameter $\gamma_2=\gamma_2(\gamma_1,E_1)$, and any $t_0 \in
[\epsilon^\frac12_n,\epsilon^{\frac14}_n]$,  we define $\delta_0$
and $\delta_1$ according to
\[
(1-\gamma_2)t_0 =  \delta_0 \ll \delta_1 \leq t_0 \ .
\]
We apply Proposition~\ref{penergyup} to $\Phi^{(n)}$ with this set
of small constants.

 Optimizing the right
hand side in \eqref{energyup} with respect to the choice of $\delta_1$ it follows
that
\[
\int_{S_1^{t_0}}  {}^{(X_0)}\!\!P_0[\Phi^{(n)}] \ dx \lesssim
\int_{S_{t_0}^{\delta_0}}  {}^{(X_0)}\!\!P_0[\Phi^{(n)}] \ dx +
|\ln(t_0/\delta_0)|^{-1} E \ .
\]
Converting the $X_0$ momentum density into the $\partial_t$ momentum
density it follows that
\[
(1-\gamma_1)^\frac12 \int_{S_1^{1-\gamma_1}}
{}^{(\partial_t)}\!\!P_0[\Phi^{(n)}] \ dx \lesssim
(1-\gamma_2)^{-\frac12}\int_{S_{t_0}^{\delta_0}}
{}^{(\partial_t)}\!\!P_0[\Phi^{(n)}] \ dx + |\ln(1-\gamma_2)|^{-1} E
\ .
\]
Hence by \eqref{energyinsidea} we obtain
\[
 (1-\gamma_1)^\frac12 E_1 \lesssim (1-\gamma_2)^{-\frac12}
 \E_{S_{t_0}^{\delta_0}}[\Phi^{(n)}]+ |\ln(1-\gamma_2)|^{-1} E \ .
\]
We choose $\gamma_2$ so that
\[
|\ln(1-\gamma_2)|^{-1} E \ll (1-\gamma_1)^\frac12 E_1 \ .
\]
Then the second right hand side term in the previous inequality
can be neglected, and for
\[
0 < E_2 \ll (1-\gamma_1)^\frac12 (1-\gamma_2)^\frac12 E_1
\]
we obtain \eqref{enprop}.\ret


\subsection{Final rescaling} The one bound concerning the rescaled wave maps $\Phi^{(n)}$
which is not yet satisfactory is \eqref{enprop}, where we would like
to have decay in $n$ instead of uniform boundedness. This can be
achieved by further subdividing the time interval
$[\epsilon_n^\frac12,\epsilon_n^\frac14]$.

For $ 2 < N < \epsilon^{-\frac14}$ we divide the time interval $[
\epsilon_n^\frac12,\epsilon_n^\frac14]$ into about $
|\ln\epsilon_n|/\ln N$ subintervals of the form $[t,Nt]$. By
pigeonholing, there exists one such subinterval which we denote by
$[t_n,Nt_n]$ so that
\begin{equation}
\dint_{C_{[t_n,Nt_n]}^{ \epsilon_n}} \frac{1}{\rho} |X_0
\Phi^{(n)}|^2 dx dt \lesssim  \frac{\ln N}{|\ln \epsilon_n|} E \ .
\label{xphidecay}
\end{equation}
We assign to $N=N_n$ the value
\[
N_n = e^{\sqrt |\ln \epsilon_n|} \ .
\]
Rescaling the wave maps $\Phi^{(n)}$ from the time interval
$[t_n,N_n t_n]$ to the time interval $[1,N_n]$ we obtain a final
sequence of rescaled wave maps, still  denoted  by $\Phi^{(n)}$,
defined on increasing sets $C_{[1,T_n]}$, where $T_n\to\infty$, with
the following properties:

\begin{enumerate}[a)]
\item Bounded energy,
\begin{equation}
 \E_{S_t}[\Phi^{(n)}](t) \approx E \ , \qquad t \in [1,T_n] \ .
\label{finala}
\end{equation}

\item Uniform amount of nontrivial time-like energy,
\begin{equation}
\E_{S_t^{(1-\gamma_2)t}}[\Phi^{(n)}](t) \geq E_2 \ , \qquad t \in
[1,T_n] \ .  \label{finalb}
\end{equation}

\item Decay to self-similar mode,
\begin{equation}
\dint_{C_{[1,T_n]}^{ \epsilon_n^\frac12}} \frac{1}{\rho} |X_0
\Phi^{(n)}|^2 dx dt \lesssim  |\log \epsilon_n|^{-\frac12} E
 \ . \label{finalc}
\end{equation}
\end{enumerate}\ret


\subsection{Concentration scales}

We partition the set $C^1_{[1,\infty)}$ into dyadic subsets
\[
C_j = \{(t,x) \in C^1_{[1,\infty)}; \ 2^{i} < t < 2^{i+1}\} \ ,  \qquad j \in \N \ .
\]
We also consider slightly larger sets
\[
 \tC_j =  \{(t,x) \in C^\frac{1}{2}_{[\frac{1}{2},\infty)}; \ 2^{i} < t < 2^{i+1}\}\ ,  \qquad j \in \N \ .
\]
Then we prove that

\begin{lem} \label{lcscales}
Let $\Phi^{(n)}$ be a sequence of wave maps satisfying \eqref{finala}, \eqref{finalb}
and \eqref{finalc}. Then for each $j \in \N$  one of the
following alternatives must hold on a subsequence:

\begin{enumerate}[(i)]
\item(Concentration of non-trivial energy) There exist  points $(t_n,x_n) \in \tC_j$, a sequence of
scales $r_n \to 0$,  and some $r=r_j$ with $ 0 < r < \frac{1}{4}$ so that the following
three bounds hold:
\begin{align}
    \E_{B(x_n,r_n)}[\Phi^{(n)}](t_n) &= \frac{1}{10} E_0 \ ,
    \label{65} \\
    \E_{B(x,r_n)}[\Phi^{(n)}](t_n) &\leq \frac{1}{10} E_0 \ , \qquad x \in B(x_n,r) \ ,
    \label{66} \\
    r_n^{-1} \int_{t_n-r_n/2}^{t_n+r_n/2} \int_{B(x_n,r)} |X_0 \Phi^{(n)}|^2 dx dt &\to 0 \ .
    \label{67}
\end{align}

\item (Nonconcentration of uniform energy) There exists some $r=r_j$ with $ 0 < r < \frac{1}{4}$ so that
the following three bounds hold:
\begin{align}
    \E_{B(x,r)}[\Phi^{(n)}](t) &\leq \frac{1}{10} E_0 \ , \qquad \forall (t,x) \in C_j \ ,
    \label{68}\\
    \E_{S_t^{(1-\gamma_2)t}}[\Phi^{(n)}](t) &\geq E_2 \ , \qquad \hbox{when\ \ }
    B\big(0,(1-\gamma_2)t\big)\subseteq C^1_{[1,\infty)} \ , \label{69}\\
    \dint_{C_j} |X_0 \Phi^{(n)}|^2 dx dt &\to 0 \ .
    \label{70}
\end{align}
\end{enumerate}
\end{lem}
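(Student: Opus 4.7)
The plan is to execute a dichotomy based on the minimal scale at which small-data energy concentration of $\Phi^{(n)}$ can occur anywhere in $\tC_j$. First, I would upgrade \eqref{finalc} to unweighted $L^2$ decay on $\tC_j$: since $\rho=\sqrt{uv}\lesssim t\lesssim 2^j$ on $\tC_j$, and $\tC_j\subset C^{\epsilon_n^{1/2}}_{[1,T_n]}$ for $n$ large, one has
\[
    M_n \ := \ \dint_{\tC_j} |X_0\Phi^{(n)}|^2 \, dx\, dt
    \ \lesssim \ 2^j \dint_{\tC_j} \rho^{-1} |X_0\Phi^{(n)}|^2 \, dx\, dt
    \ \longrightarrow \ 0,
\]
which yields \eqref{70} in alternative (ii) and is the raw input for \eqref{67} in alternative (i).

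Next, define the minimal concentration scale
\[
    r_n^* \ = \ \inf\bigl\{r>0 \ : \ \exists\,(t,x)\in\tC_j \text{ with }
    \E_{B(x,r)}[\Phi^{(n)}](t) \geq \tfrac{1}{10}E_0\bigr\},
\]
with $r_n^*=+\infty$ if no such $r$ exists. After passing to a subsequence, either $\liminf_n r_n^* \geq r_j > 0$ or $r_n^*\to 0$. In the first case I take $r_j$ slightly less than this liminf; by definition no ball of radius $r_j$ centered in $C_j\subset\tC_j$ carries energy $\geq E_0/10$, giving \eqref{68}. Bound \eqref{69} is \eqref{finalb} restricted to $t\in[2^j,2^{j+1}]$, while \eqref{70} was established above. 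This is alternative (ii).

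In the second case, $r_n^*\to 0$, I construct $(t_n,x_n,r_n)$ witnessing alternative (i). For $r_n$ slightly above $r_n^*$, continuity of the energy and the intermediate value theorem in the radius variable allow one to select a maximizer $(t_n,x_n)$ of $(t,x)\mapsto\E_{B(x,r_n)}[\Phi^{(n)}](t)$ over $\tC_j$ at which \eqref{65} holds with exact equality. Maximality automatically supplies \eqref{66} for any $r_j<\frac14$ chosen small enough that $B(x_n,r_j)$ lies in the spatial cross-section of $\tC_j$ at time $t_n$, which I arrange by restricting the search for $(t_n,x_n)$ to points at distance $\geq r_j$ from $\partial\tC_j$.

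The hard part will be \eqref{67}: we need $r_n^{-1}\int\!\!\int|X_0\Phi^{(n)}|^2\,dx\,dt \to 0$, and \emph{a priori} nothing relates the geometric concentration scale $r_n$ to the analytic decay rate $M_n$. My approach is pigeonholing in the time variable using slack from finite speed of propagation: if \eqref{65} holds at an initial time $t_n^{(0)}$, then $\E_{B(x_n,2r_n)}[\Phi^{(n)}](t)\geq E_0/10$ for every $t\in[t_n^{(0)}-r_n/2,t_n^{(0)}+r_n/2]$, so we may slide $t_n$ within an interval of length $\sim r_n$ while preserving concentration at a slightly enlarged scale. Chebyshev-style pigeonholing then produces a subinterval whose cylindrical $X_0$-integral is a controlled fraction of $M_n$, and a diagonal subsequence extraction --- in which we enlarge $r_n$ just enough to force $M_n/r_n\to 0$ and rerun the intermediate value procedure to restore \eqref{65} and \eqref{66} at the new scale --- yields the normalized bound. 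The core technical obstacle is precisely this simultaneous control of the two scales: $r_n$ must be small enough to be a genuine concentration scale yet large enough that the $r_n^{-1}$ normalization is compatible with the available $X_0$ decay, all without breaking either the equality \eqref{65} or the local maximality \eqref{66}.
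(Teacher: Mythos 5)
Your setup (the unweighted decay of the $X_0$ integral via $\rho\lesssim 2^j$, the dichotomy on a minimal concentration scale, and the choice of the critical radius by continuity/IVT so that \eqref{65} holds with equality and \eqref{66} follows from maximality) matches the paper's strategy, and your treatment of alternative (ii) and of \eqref{69}, \eqref{70} is fine. But the resolution you propose for \eqref{67} has a genuine gap. You correctly identify that nothing forces $M_n/r_n\to 0$, and your fix is to ``enlarge $r_n$ just enough to force $M_n/r_n\to 0$ and rerun the intermediate value procedure.'' This cannot work: $r_n$ is pinned to be the critical concentration scale at time $t_n$ — that is exactly what makes \eqref{65} hold with equality \emph{and} \eqref{66} hold simultaneously (the supremal ball energy at radius $r_n$ equals $E_0/10$). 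At any strictly larger radius the supremal ball energy exceeds $E_0/10$, so \eqref{66} fails; you cannot restore both properties at an inflated scale. Likewise, sliding $t_n$ only within a window of length $\sim r_n$ buys nothing, since all of the mass $M_n$ could sit in that single window, and pigeonholing over $\sim |I_k|/r_n$ disjoint windows of length $r_n$ at a \emph{fixed} radius fails because the critical concentration scale at the selected window's time need not be $r_n$ any more.

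The paper closes this gap differently. It works with the time-\emph{dependent} concentration scale $r_n(t)$ (the smallest radius at which some ball carries energy $E_0/10$ at time $t$), observes that finite speed of propagation makes $t\mapsto r_n(t)$ Lipschitz with constant $1$, and runs a trichotomy against $\alpha_n=M_n^{1/2}$. If $r_n(t)>\alpha_n$ on the whole interval, the minimum point $t_n$ gives $\alpha_n^2/r_n(t_n)\leq\alpha_n\to 0$ and \eqref{67} is immediate. In the opposite regime $r_n(t)<\alpha_n$ everywhere, one averages the \emph{locally adapted} quantity $\frac{1}{r_n(t)}\int_{t-r_n(t)/2}^{t+r_n(t)/2}g(s)\,ds$ (with $g(s)=\int|X_0\Phi^{(n)}(s)|^2dx$) over a time interval of fixed length; the Lipschitz property lets Fubini bound this average by $4\alpha_n^2$ independently of how small $r_n(t)$ is, and pigeonholing produces a time $t_n$ whose own critical scale $r_n(t_n)$ simultaneously satisfies \eqref{65}, \eqref{66} and the normalized bound \eqref{67}. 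This adapted-window averaging, together with the Lipschitz continuity of $r_n(\cdot)$ and the preliminary low-energy barrier that keeps the maximizing center $x_n$ a fixed distance from the boundary, is the missing content of your argument.
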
\ret

\begin{proof}
The argument boils down to some straightforward pigeonholing, and is essentially
identical for all $j\in\N$, which is now fixed throughout the proof.
Given any large parameter $N \in \N$
we partition the time interval $[2^j,2^{j+1}]$ into about $N2^j $ equal intervals,
\[
 I_k = [2^j+(k-1)/(10N),2^j+k/(10N)], \qquad k = 1,10N2^j \ .
\]
Then it suffices to show that the conclusion of the lemma holds
with $C_j$ and $\tC_j$  replaced by
\[
 C_j^k = C_j \cap I_k \times \R^2, \qquad  \tC_j^k = \tC_j \cap I_k \times \R^2 \ .
\]
We begin by constructing a low energy barrier around $C_j^k$.
To do this we partition $\tC_j^k\setminus C_j^k$ into
$N$ sets
\[
    \tC_j^{k,l} = \left\{ (t,x) \in \tC_j^k; \  \frac{1}{4}+\frac{l-1}{4N}
    < t-|x| < \frac{1}{4}+\frac{l}{4N}\right\} \ , \quad j=1,N \ .
\]
By integrating energy estimates we have
\[
    \sum_{l=1}^{N} \int_{I_k} \E_{\tC_j^{k,l} }[\Phi^{(n)}](t) dt
    \leq \int_{I_k}\E_{ \tC_j^k}[\Phi^{(n)}](t) dt \leq \frac{1}{10N} E \ .
\]
Thus by pigeonholing, for each fixed $n$ there must exist $l_n$ so that
\[
 \sum_{l=l_n-1}^{l_n+1} \int_{I_k} \E_{ \tC_j^{k,l}}[\Phi^{(n)}](t) dt
 \leq \frac{3}{10N^2} E \ ,
\]
and further there must be some $t_n \in I_k$ so that
\[
 \sum_{j=j_n-1}^{j_n+1} \E_{\tC_j^{k,l} }[\Phi^{(n)}](t_n)
 \leq \frac{3}{N} E \ .
\]
For $t \in I_k$ we have $|t-t_n| < 1/(10N)$, and therefore the $t$ section of
$\tC_j^{k,l_n}$ lies within the influence cone of the $t_n$ section
of $\tC_j^{k,l_n-1} \cap  \tC_j^{k,l_n}  \cap \tC_j^{k,l_n+1} $.
Hence it follows that one has the uniform bound
\[
 \E_{ \tC_j^{k,l_n}}[\Phi^{(n)}(t)] \leq \frac{3}{N} E, \qquad t \in I_k \ .
\]
We choose $N$ large enough so that we beat the perturbation energy
\[
 \frac{3}{N} E \leq \frac{1}{20} E_0 \ .
\]
Then the set $\tC_j^{k,l_n}$ acts as an energy barrier for $\Phi^{(n)}$ within
$\tC_j^{k}$, separating the evolution inside from the evolution outside
with a small data region. We denote the inner region by
$\tC_j^{k,<l_n}$ and its union with $\tC_j^{k,l_n}$ by
$\tC_j^{k,\leq l_n}$.  We fix $r_0$ independent of $n$ so that
\[
 (t,x) \in \tC_j^{k,<l_n} \Longrightarrow \{t\} \times B(x,4r_0)
\subset  \tC_j^{k,\leq l_n} \ .
\]

To measure the energy concentration in balls we define the functions
\[
f_n: [0,r_0] \times I_k \to \R^+ \ , \qquad
f_n(r,t) = \sup_{\{x;\{t\} \times B(x,r) \subset  \tC_j^{k,\leq l_n}\}}
\E_{B(x,r)}[\Phi^{(n)} ](t) \ .
\]
The functions $f_n$ are continuous in both variables and nonincreasing with respect to $r$.  We also define the functions
\[
    r_n: I_k \to (0,r_0] \ , \qquad r_n(t) =
    \begin{cases}
        \inf \{r \in [0,r_0];\ f_n(t,r) \geq \frac{E_0}{10}\}\ ,
        & \text{if } f_n(t,r_0) \geq \frac{E_0}{10};\\
        r_0  \ , & \text{otherwise}.
    \end{cases}
\]
which measure the lowest spatial scale on which concentration occurs
at time $t$. Due to the finite speed of propagation it follows that
the $r_n$ are Lipschitz continuous with Lipschitz constant $1$,
\[
|r_n(t_1) - r_n(t_2)| \leq |t_1-t_2| \ .
\]

The nonconcentration estimate \eqref{68} in case ii) of the Lemma corresponds to the case
when all functions $r_n$ admit a common strictly positive lower bound (note that \eqref{finalb} and \eqref{finalc}
give the other conclusions).

It remains to consider the case when on a subsequence we have
\[
 \lim_{n \to \infty} \inf_{I_k} r_n = 0
\]
and show that this yields the concentration scenario i).
We denote ``kinetic energy'' in $\tC_j$ by
\[
\alpha_n^2 = \int_{\tC_j}  |X_0 \Phi^{(n)}|^2 dxdt \ .
\]
By \eqref{finalc} we know that $\alpha_n \to 0$.
Using $\alpha_n$ as a threshold for the concentration functions $r_n$, after passing
to a subsequence we must be in one of the following three cases:\ret

\case{1}{$r_n$ Dominates} For each $n$ we have $ r_n(t) >\alpha_n $
in $I_k$. Then we let $t_n$ be the minimum point for $r_n$ in $I_k$
and set $r_n^0 = r_n(t_n) \to 0$. By definition we have
$f_n(t_n,r_n(t_n)) = E_0/10$. We choose a point $x_n$ where the
maximum of $f_n(t_n,r_n(t_n))$ is attained. This directly gives
\eqref{65}. For \eqref{66} we observe that, due to the existence of
the energy barrier, $x_n$ must be at least at distance  $3r_0$ from
the lateral boundary of $\tC_j^{k,\leq l_n}$. Hence, if $x \in
B(x_n,r_0)$ then $x$ is at distance at least $2 r_0$ from  $\partial
\tC_j^{k,\leq l_n}$ and \eqref{66} follows. For \eqref{67} it
suffices to know that $r_n(t_n)^{-1} \alpha_n^2 \to 0$, which is
straightforward from the assumptions of this case.\ret

\case{2}{Equality}  For each $n$ there exists $t_n \in I_k$ such that $ \alpha_n = r_n(t_n)$.
This argument is a repeat of the previous to the one, given that we define $r_n^0 = r_n(t_n)$ and set
up the estimate \eqref{66} around this $t_n$ as opposed to the minimum of $r_n(t)$.\ret

\case{3}{$\alpha_n$ Dominates} For each $n$ we have $r_n(t) < \alpha_n$ in $I_k$.
For $t \in I_k$ set
\[
 g(t) = \int_{\tC_j^{k,\leq l_n}}  |X_0 \Phi^{(n)}(t)|^2 dx \ .
\]
Then by definition
\[
 \int_{I_k} g(t) = \alpha_n^2 \ .
\]
Let $\td{I}_k$ be the middle third of $I_k$, and consider the localized averages
\[
\I = \int_{\td{I}_k} \frac{1}{r_n(t)} \int_{t-r_n(t)/2}^{t+r_n(t)/2} g(s) ds dt \ .
\]
Since $r_n(t)$ is Lipschitz with Lipschitz constant $1$, if
$s \in [t-r_n(t)/2,t+r_n(t)/2]$ then $\frac12 r_n(s) <   r_n(t) <  2r_n(s)$
and  $t \in [s-r_n(s),s+r_n(s)]$. Hence changing the order of integration
in $\I$ we obtain
\[
 \I \leq 2 \int_{\td{I}_k}
\int_{t-r_n(t)/2}^{t+r_n(t)/2} \frac{1}{r_n(s)} g(s) ds dt \leq 4 \int_{I_k} g(s) ds
= 4 \alpha_n^2 \ .
\]
Hence by pigeonholing there exists some $t_n \in \td{I}_k$ so that
\[
  \frac{1}{r_n(t_n)} \int_{t_n-r_n(t_n)/2}^{t_n+r_n(t_n)/2} g(s) ds \leq
4 \alpha_n^2 |\td{I}_k|^{-1}
\]
Then let $x_n$ be a point where the supremum in the definition of
$f(t_n,r_n)$ is attained. The relations \eqref{65}-\eqref{68}
follow as above.
\end{proof}\ret


\subsection{The compactness argument}

To conclude the proof of the Theorems~\ref{maint},\ref{maints} we consider
separately the two cases in Lemma~\ref{lcscales}:\ret

(i) \textbf{Concentration on small scales.} Suppose that the alternative (i)
in  Lemma \ref{lcscales} holds for some $j \in \N $.
On a subsequence we can assume that $(t_n,x_n) \to (t_0,x_0) \in \tC_j$.
Then we define the rescaled wave maps
\[
 \Psi^{(n)}(t,x) = \Phi^{(n)}(t_n+r_n t, x_n+r_n x)
\]
in the increasing sets $B(0,r_0/r_n) \times [-\frac12,\frac12]$
They have the following properties:

\begin{enumerate}[a)]
\item Bounded energy,
\[
\E[\Psi^{(n)}](t) \leq \E[\Phi] \ , \qquad t\in [-\frac12,\frac12] \ .
\]

\item Small energy in each unit ball,
\[
\sup_x \E_{B(x,1)}[\Psi^{(n)}](t) \leq \frac{1}{10} E_0 \ , \qquad t\in [-\frac12,\frac12] \ .
\]

\item Energy concentration in the unit ball centered at $t=0,x=0$,
\[
 \E_{B(0,1)}[\Psi^{(n)}](0) = \frac{1}{10} E_0 \ .
\]

\item Time-like energy decay: There exists a constant time-like vector $X_0(t_0,x_0)$
such that for each $x$ we have
\[
\dint_{[-\frac12,\frac12]\times B(x,1)} |X_0(t_0,x_0) \Psi^{(n)} |^2 dx dt \to 0 \ .
\]
\end{enumerate}
Note  in part (d) we used the fact that $(t_n,x_n) \to (t_0,x_0)$.

By the
compactness result in Proposition~\ref{pcompact} it follows that on a
subsequence we have strong uniform convergence on compact sets,
\[
\Psi^{(n)} \to \Psi \qquad \text{ in } H^{1}_{loc}\Big( B(0,r_0/(2r_n) \times [-\frac12,\frac12]\Big)
\]
where $\Psi \in H^{\frac32-\epsilon}_{loc}$ is a wave-map. Thus, we have obtained a wave map
$\Psi$ defined on all of $[-\frac12,\frac12] \times \R^2$, with the additional properties that
\[
\frac{1}{10} E_0 \leq  \E[\Psi] \leq E[\Phi]
\]
and
\[
X_0(t_0,x_0) \Psi = 0 \ .
\]
Then $\Psi$ extends uniquely to a wave map in $\R \times \R^2$ with
the above properties (e.g by transporting its values along the flow of $X_0(t_0,x_0)$).
After a Lorentz transform that takes $
X_0(t_0,x_0)$ to $\partial_t$ the function $\Psi$ is turned into a
nontrivial finite energy harmonic map with energy bound
$\E[\Psi] \leq E[\Phi]$.\ret

(ii) \textbf{Nonconcentration.} Assume now that the alternative (ii)
in  Lemma~\ref{lcscales} holds for \emph{every} $j \in \N$.
 There there is no need to rescale.
Instead, we successively use directly the compactness result in Proposition~\ref{pcompact}
in the interior of each set $C_j\cap C_{[2,\infty)}^2$. We obtain strong convergence on a subsequence
\[
\Phi^{(n)} \to \Psi \qquad \text{ in } H^{1}_{loc}(C_{[2,\infty)}^2)
\]
with $\Psi \in H^{\frac32-\epsilon}_{loc}(C_{[2,\infty)}^2)$.
From \eqref{69} and energy bounds (e.g. \eqref{finalb}) we obtain
\[
0 < E_2 \leq \sup_{t\geqslant 2}\E_{B(0,t-2)}[\Psi](t) \leq \E[\Phi] \ .
\]
From \eqref{70}  it follows also that
\[
X_0 \Psi = 0 \ .
\]
By rescaling (i.e. extending $\Psi$ via homogeneity), we may replace 
the interior of the translated cone $C_{[2,\infty)}^2$ with the interior of the full cone
$t>r$ and retain the assumptions on $\Psi$, in particular that it is non-trivial
with finite energy up to the boundary $t=r$.
But this contradicts Theorem \ref{ss_thm}, and therefore shows that
scenario (i) above is in fact the only alternative.


\bibliography{wm}
\bibliographystyle{plain}
\end{document}